\theoremstyle{plain}
\newtheorem{thm}{\protect\theoremname}
  \theoremstyle{definition}
  \newtheorem{condition}{Condition}
  \theoremstyle{plain}
  \theoremstyle{plain}
\theoremstyle{plain}
  \theoremstyle{remark}
  \newcommand{\R}{I\!\!R}
  \newcommand{\mC}{\mathcal{C}}
   \newcommand{\mD}{\mathcal{D}}
\providecommand{\\}{\\}
\DeclareMathOperator{\Corr}{Corr}
\DeclareMathOperator{\E}{\mathbb{E}}
\newcommand{\Etal}{{ et al.}}
\newcommand{\bx}{\mbox{\bf x}}
\newcommand{\be}{\mbox{\bf e}}
\newcommand{\bX}{\mbox{\bf X}}
\newcommand{\bsX}{\mbox{\scriptsize \bf X}}
\newcommand{\bZ}{\mbox{\bf Z}}
\newcommand{\bbeta}{\mbox{\boldmath $\beta$}}
\newcommand{\bsbeta}{\mbox{\scriptsize \boldmath $\beta$}}
\newcommand{\bSigma}{\mbox{\boldmath $\Sigma$}}
\newcommand{\FDR}{\mathrm{FDR}}
\newcommand{\Var}{\mbox{Var}}
\newcommand{\Cov}{\mathrm{Cov}}
\newcommand{\cov}{\mathrm{Cov}}
\newcommand{\argmin}{\mathrm{argmin}}
\def\cov{\mbox{Cov}}
\newcommand{\lp}{\left(}
\newcommand{\rp}{\right)}
\providecommand{\theoremname}{Theorem}
\begin{document}

\title{\bf Conditional Sure Independence Screening \thanks{Emre Barut is graduate student (Email: abarut@princeton.edu), Jianqing Fan is Frederick L. Moore'18 professor (Email: jqfan@princeton.edu), Department of Operations Research \& Financial Engineering, Princeton University, Princeton, NJ 08544, USA. Anneleen Verhasselt is assistant professor, Department of Mathematics and Computer Science,  University of Antwerp, Belgium.  The paper was initiated while Anneleen Verhasselt was a visiting postdoctoral fellow at Princeton University.  This research was partly supported by NSF Grant DMS-1206464,  NIH Grants R01-GM072611 and R01-GM100474, and FWO Travel Grant V422811N.
}
}

\author{Emre Barut, Jianqing Fan \\ Princeton University \and Anneleen Verhasselt\\ University of Antwerp}
\date{}

\maketitle

\vspace*{-0.5 in}
\onehalfspacing
\begin{abstract}
Independence screening is a powerful method for variable selection for `Big Data' when the number of variables is massive. Commonly used independence screening methods are based on marginal correlations or variations of it.  In many applications, researchers often have some prior knowledge that a certain set of variables is related to the response.  In such a situation, a natural assessment on the relative importance of the other predictors is the conditional contributions of the individual predictors in presence of the known set of variables.  This results in conditional sure independence screening (CSIS). Conditioning helps for reducing the false positive and the false negative rates in the variable selection process.  In this paper, we propose and study CSIS in the context of generalized linear models.  For ultrahigh-dimensional statistical problems, we give conditions under which sure screening is possible and derive an upper bound on the number of selected variables.  We also spell out the situation under which CSIS yields model selection consistency.  Moreover, we provide two data-driven methods to select the thresholding parameter of conditional screening. The utility of the procedure is illustrated by simulation studies and analysis of two real data sets.
\vfill

\noindent{\it Keywords and phrases}: False selection rate; Generalized linear models; Sparsity; Sure screening; Variable selection.
\end{abstract}

\vfill

\newpage
\doublespacing
\section{INTRODUCTION}

Statisticians are nowadays frequently confronted with massive data sets from various frontiers of scientific research.  Fields such as genomics, neuroscience, finance and earth sciences have different concerns on their subject matters, but nevertheless share a common theme:  They rely heavily on extracting useful information from massive data and the number of covariates $p$ can be huge in comparison with the sample size $n$. In such a situation, the parameters are identifiable only when the number of the predictors that are relevant to the response is small, namely, the vector of regression coefficients is sparse.  This sparsity assumption has a nice interpretation that only a limited number of variables have a prediction power on the response. To explore the sparsity, variable selection techniques are needed.

Over the last ten years, there has been many exciting developments in statistics and machine learning on variable selection techniques for ultrahigh dimensional feature space.  They can basically be classified into two classes:  penalized likelihood and screening.   Penalized likelihood techniques are well known in statistics: Bridge regression (\citealt{bridge}), Lasso (\citealt{Lasso}), SCAD or other folded concave regularization methods (\citealt{FanLi01, FL11, ZZ12}), and Dantzig selector (\citealt{DS,BRT09}), among others.  These techniques select variables and estimate parameters simultaneously by solving a high-dimensional optimization problem.  See \citet{ESL} and \cite{BV11} for an overview of the field.  Despite the fact that various efficient algorithms have been proposed (\citealt{LassoDual,OPT2,Lasso2,FL11}), statisticians and machine learners still face huge computational challenges when the number of variables is in tens of thousands of dimensions or higher. This is particularly the case as we are entering the era of ``Big Data'' in which both sample size and dimensionality are large.

With this background, \cite{SIS} propose a two-scale approach, called iterative sure independence screening (ISIS), which screens and selects variables iteratively.  The approach is further developed by \citet{SISML} in the context of generalized linear models. Theoretical properties of sure independence screening for generalized linear models have been thoroughly studied by \cite{SISGLM}.  Other marginal screening methods include tilting methods (\citealt{HTX09}), generalized correlation screening (\citealt{HM09}), nonparametric screening (\citealt{AddSIS}), and robust rank correlation based screening (\citealt{RobustSIS}), among others.  The merits of screening include expediences in distributed computation and implementation. By ranking marginal utility such as marginal correlation with the response, variables with weak marginal utilities are screened out by a simple thresholding.

The simple marginal screening faces a number of challenges.  As pointed out in \cite{SIS}, it can screen out those hidden signature variables: those who have a big impact on response but are weakly correlated with the response.  It can have large false positives too, namely recruiting those variables who have strong marginal utilities but are conditionally independent with the response given other variables.  \cite{SIS} and \cite{SISML} use a residual based approach to circumvent the problem but the idea of conditional screening has never been formally developed.

Conditional marginal screening is a natural extension of simple independent screening. In many applications, researchers know from previous investigations that certain variables $\bX_{\mC}$ are responsible for the outcomes. This knowledge should be taken into account when applying a variable selection technique in order not to remove these predictors from the model and to improve the selection process. Conditional screening recruits additional variables to strengthen the prediction power of $\bX_{\mC}$, via ranking conditional marginal utility of each variable in presence of $\bX_{\mC}$. In absence of such a prior knowledge, one can take those variables that survive the screening and selection as in \cite{SIS}.

Conditional screening has several advantages.  First of all, it makes it possible to recover the hidden significant variables.  This can be seen by considering the following linear regression model
\begin{equation} \label{eq1}
   Y=\bX^T \bbeta^\star +\varepsilon, \qquad E \bX \varepsilon = 0,
\end{equation}
with $\bbeta^\star=(\beta^\star_1,\ldots,\beta^\star_p)^T$.
The marginal covariance  between $X_j$ and $Y$ is given by
\[
\Cov(X_j,Y)=\Cov(X_j,\bX \bbeta)=\be_j^T \bSigma \bbeta^\star,
\]
where $\be_j \in \R^p$ is equal to 0, except for its $j$th element which equals to 1. This shows that the marginal covariance between $X_j$ and $Y$ is zero if $\beta^\star_j= -\sum_{k\neq j} \beta^\star_{k} \sigma_{kj}$, where $\sigma_{kj}$ is the $(k, j)$ element of $\bSigma = \Var(\bX)$, with $\bX=(X_1,\ldots,X_p)^T$.  Yet, $\beta^\star_j$ can be far away from zero.  In other words, under the conditions listed above, $X_j$ is a hidden signature variable.
To demonstrate that, let us consider the case in which $p=2000$, with true regression coefficients $\bbeta^\star= (3,3,3,3,3,-7.5,0, \cdots,0)^T$,  and all variables follow the standard normal distribution with equal correlation 0.5, and $\varepsilon$ follows the standard normal distribution.  By design, $X_6$ is a hidden signature variable, which is marginally uncorrelated with the response $Y$.  Based on a random sample of size $100$ from the model, we fit marginal regression and obtain the marginal estimates $\{\hat{\beta}_j^M\}_{j=1}^p$.  The magnitudes of these estimates are summarized by their averages over three groups:  indices 1 to 5 (denoted by $\bbeta_{1:5}^M$), 6 and indices 7 to 2000.
Clearly, the magnitude on the first group should be the largest, followed by the third group.  Figure~\ref{Fig1a} depicts the distributions of those marginal magnitudes based on 10000 simulations.  Clearly variable $X_6$ can not be selected by marginal screening.

\begin{figure}[ht]
\centering
\subfigure[Distribution of $\left| \hat{\beta}^M \right|$]{
\includegraphics[width=0.45\textwidth]{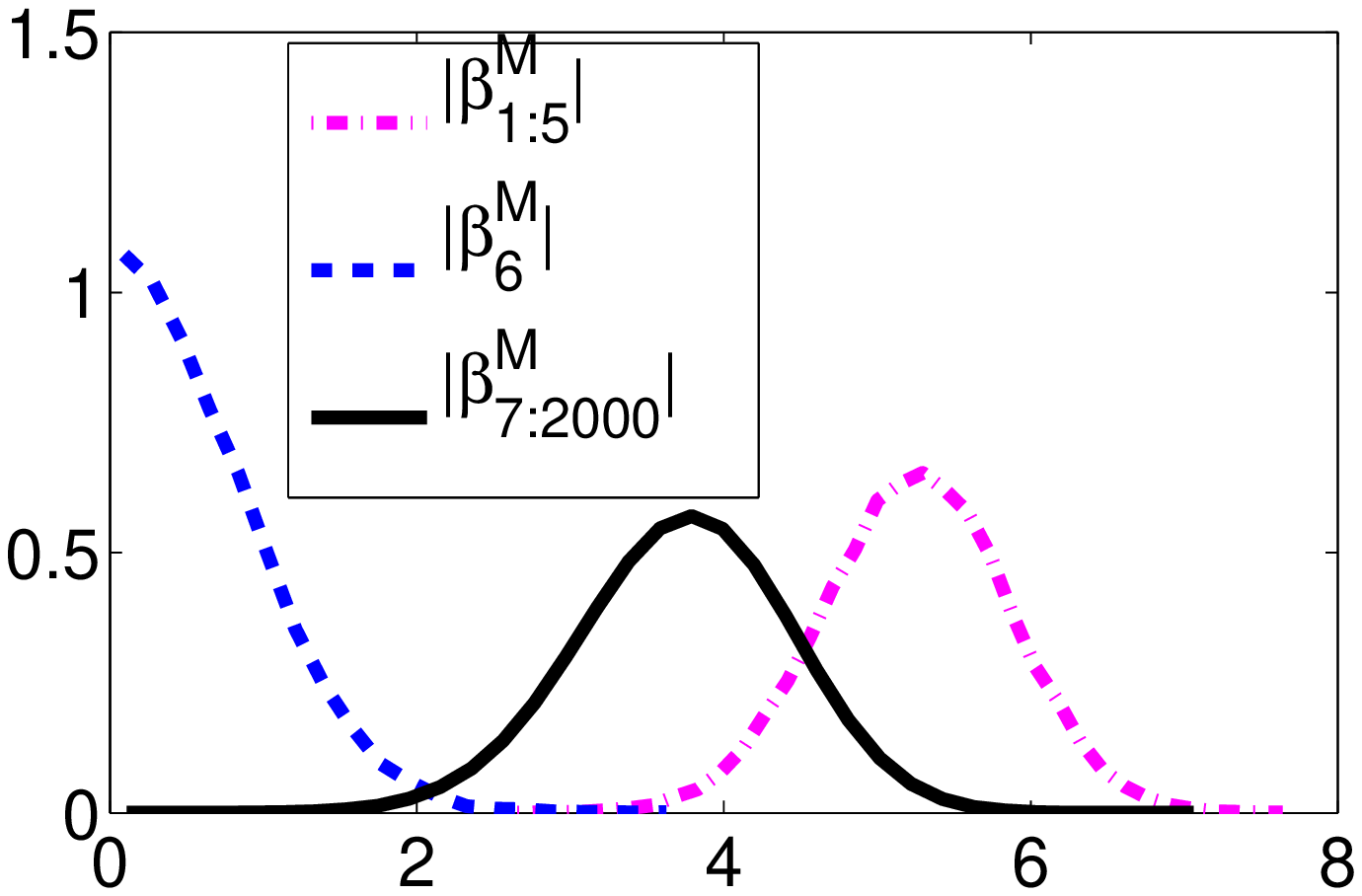}\label{Fig1a}
}
\subfigure[Distribution of $\left| \hat{\beta}_{\mC}^{M} \right|$]{
\includegraphics[width=0.45\textwidth]{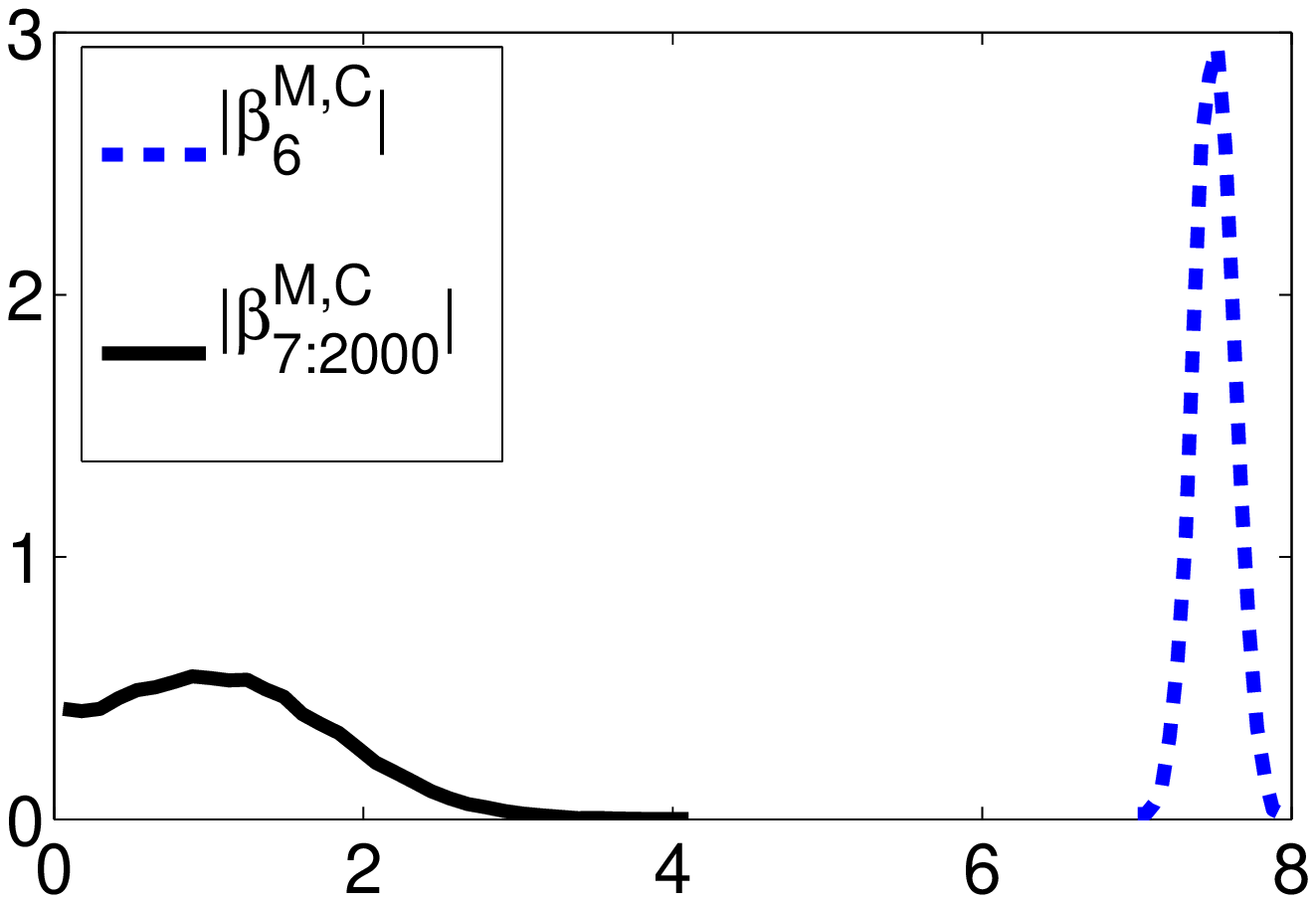}\label{Fig1b}
}\\
\subfigure[Distribution of $\left| \hat{\beta}_{\mC}^{M} \right|$]{
\includegraphics[width=0.45\textwidth]{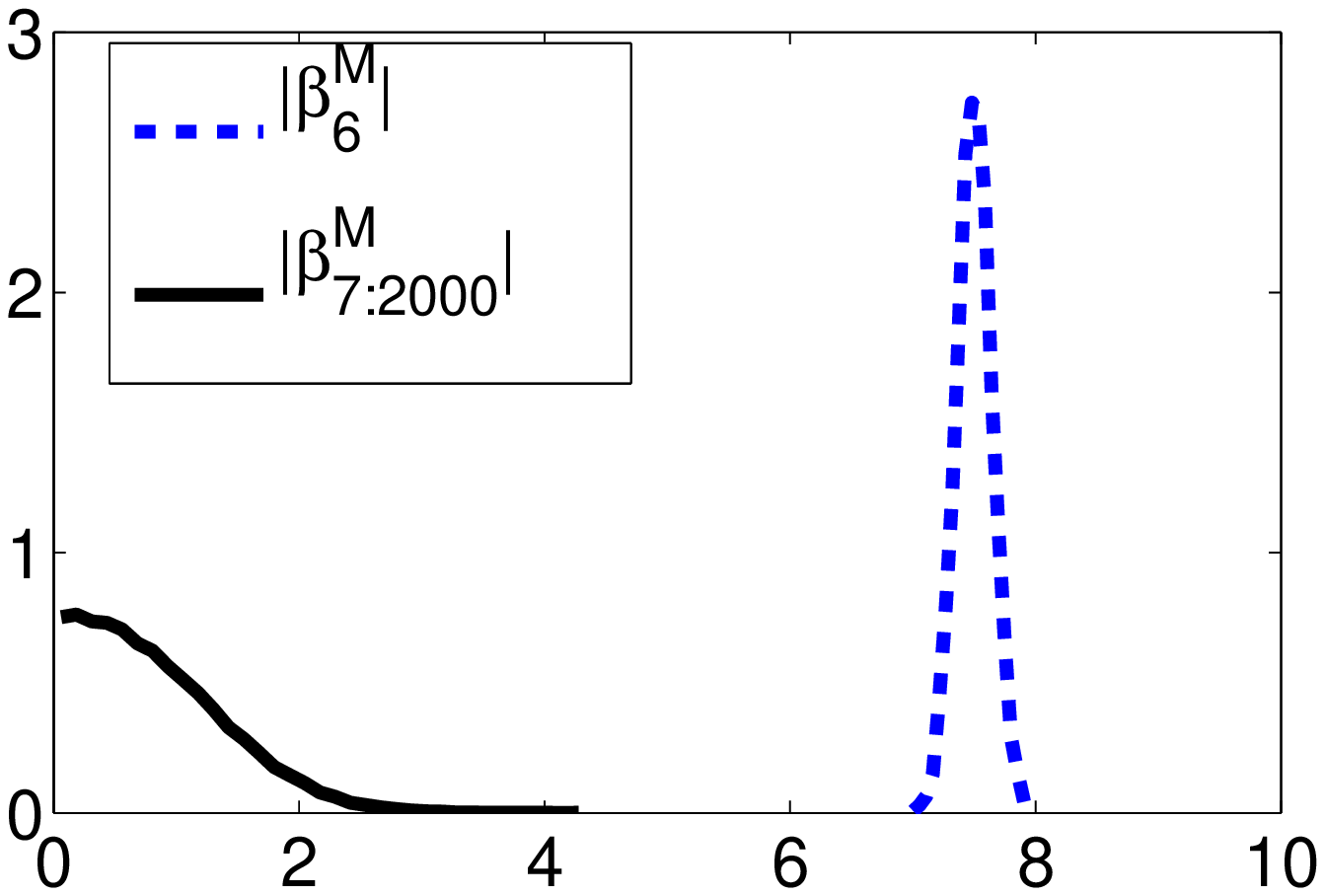}\label{Fig1c}
}
\subfigure[Distribution of $\left| \hat{\beta}_{\mC}^{M} \right|$]{
\includegraphics[width=0.45\textwidth]{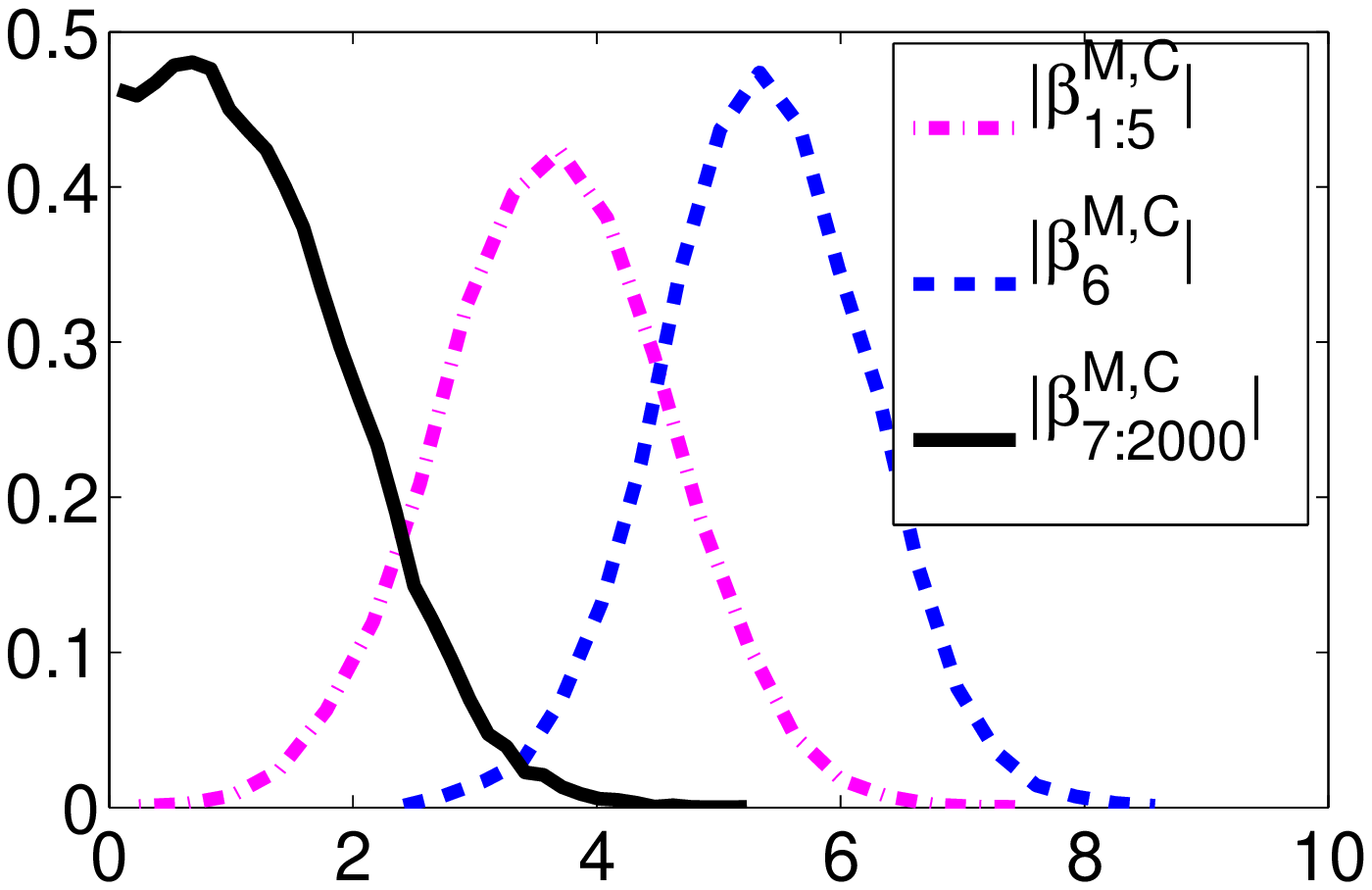}\label{Fig1d}
}
\caption{Benefits of conditioning against false negatives. Upper left panel: the distributions of the averages of magnitudes $|\hat{\beta}_j^{M}|$ of marginal regression coefficients over three groups of variables 1:5, 6, 7:2000. Upper right panel:  the distributions of the averages of the magnitude $|\hat{\beta}^{M}_{\mC j}|$ of conditional marginal regression coefficients over two groups of variables:  6 and 7:2000. Lower left panel: the distributions of the magnitudes $|\hat{\beta}^{M}_{\mC j}|$ of conditional marginal regression when the conditioned set includes inactive variables. Lower right panel: the distributions of the averages of the magnitude $|\hat{\beta}^{M}_{\mC j}|$ of conditional marginal regression coefficients given five randomly chosen variables.}
\label{Fig1}
\end{figure}

Adapting the conditional screening approach gives a very different  result. Conditioning upon the first five variables, conditional correlation between $X_6$ and $Y$ has a large magnitude. With the same simulated data as in the above example, the regression coefficient $\hat{\beta}_{\mC j}^{M}$ of $X_j$ in the joint model with the first five variables is computed.  This measures the conditional contribution of variable $X_j$ in presence of the first five variables.  Again, the magnitudes $\{|\hat{\beta}_{\mC j}^{M}|\}_{j=6}^{2000}$ are summarized into two values:  $|\hat{\beta}_{\mC 6}^{M}|$ and the average of $\{|\hat{\beta}_{\mC j}^{M}|\}_{j=7}^{2000}$.  The distributions of those over 10000 simulations are also depicted in Figure~\ref{Fig1b}.  Clearly, the variable $X_6$ has higher marginal contributions than others.  That is, conditioning helps recruiting the hidden signature variable. Furthermore, conditioning is fairly robust to extra elements. To demonstrate that, we have repeated the previous experiment with conditioning on five more randomly chosen features. The distribution of the magnitudes are given in Figure~\ref{Fig1c}. It is seen that the important hidden variable again has a large magnitude.

The benefits of conditioning are observed even if the conditioned variables are not in the active set. To demonstrate that, the regression coefficient $\hat{\beta}_{\mC j}^{M}$ of $X_j$ has been computed while conditioning on five randomly chosen inactive variables. That is, contribution of variable $X_j$ is calculated in the presence of these five randomly chosen inactive variables. The magnitudes of $\{|\hat{\beta}_{\mC j}^{M}|\}_{j=1}^{2000}$ are summarized in three groups: the average of the first five important variables, i.e. $\{|\hat{\beta}_{\mC j}^{M}|\}_{j=1}^{5}$, $|\hat{\beta}_{\mC 6}^{M}|$ and the average of $\{|\hat{\beta}_{\mC j}^{M}|\}_{j=7}^{2000}$. The distributions for these variables over 10000 simulations are given in Figure~\ref{Fig1d}.  It is observed that the magnitude of the hidden signature variable increases significantly and hence it will surely  not be missed during the screening. In other words, conditioning can help to recruit the important variables, even when the conditional set is not ideally chosen.

\begin{figure}[ht]
\centering
\subfigure[Distribution of $\left| \hat{\beta}^M \right|$]{
\includegraphics[width=0.45\textwidth]{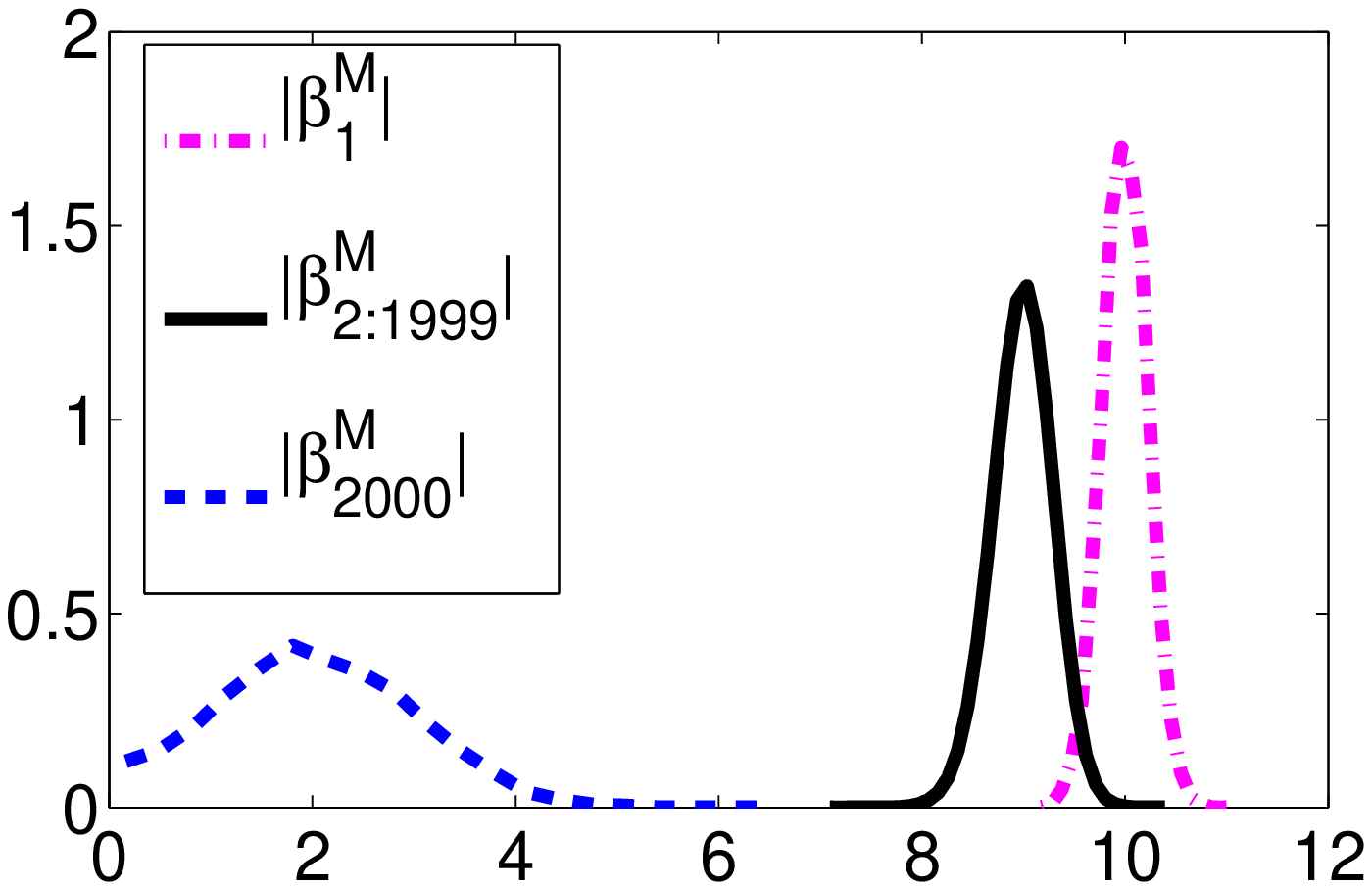}\label{Fig2a}
}
\subfigure[Distribution of $\left| \hat{\beta}_{\mC}^{M} \right|$]{
\includegraphics[width=0.45\textwidth]{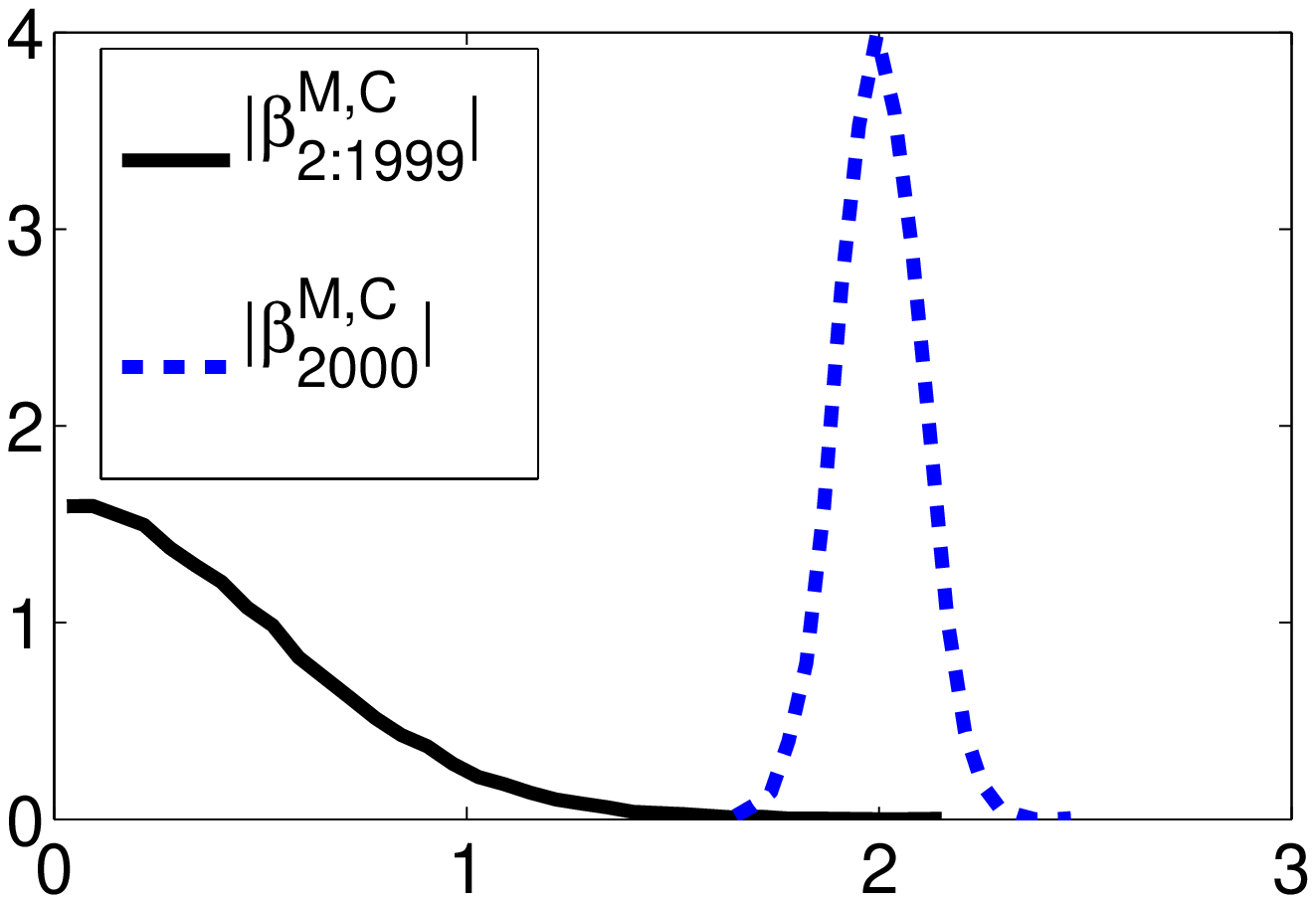}\label{Fig2b}
}\\
\subfigure[Distribution of $\left| \hat{\beta}_{\mC}^{M} \right|$]{
\includegraphics[width=0.45\textwidth]{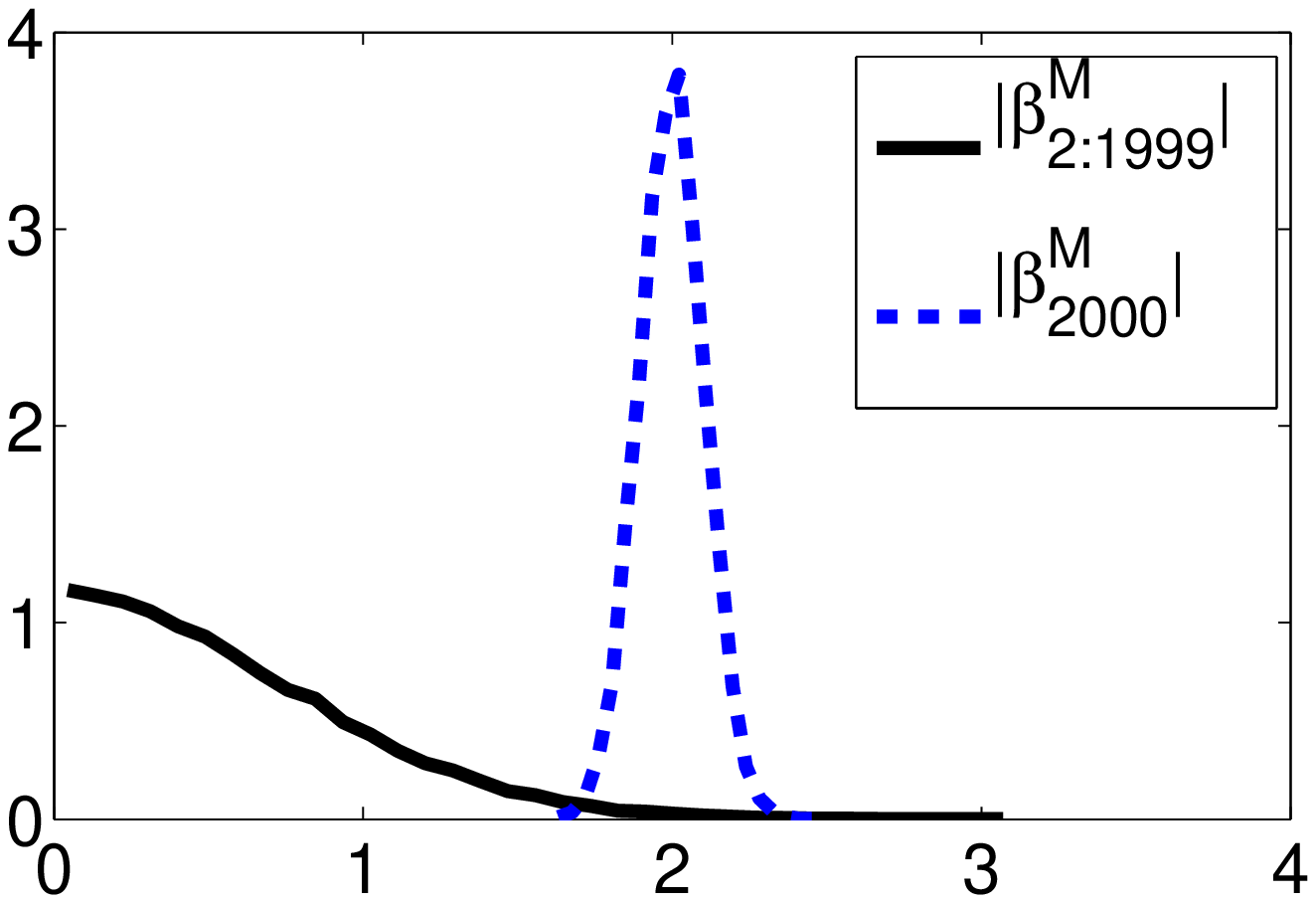}\label{Fig2c}
}
\subfigure[Distribution of $\left| \hat{\beta}_{\mC}^{M} \right|$]{
\includegraphics[width=0.45\textwidth]{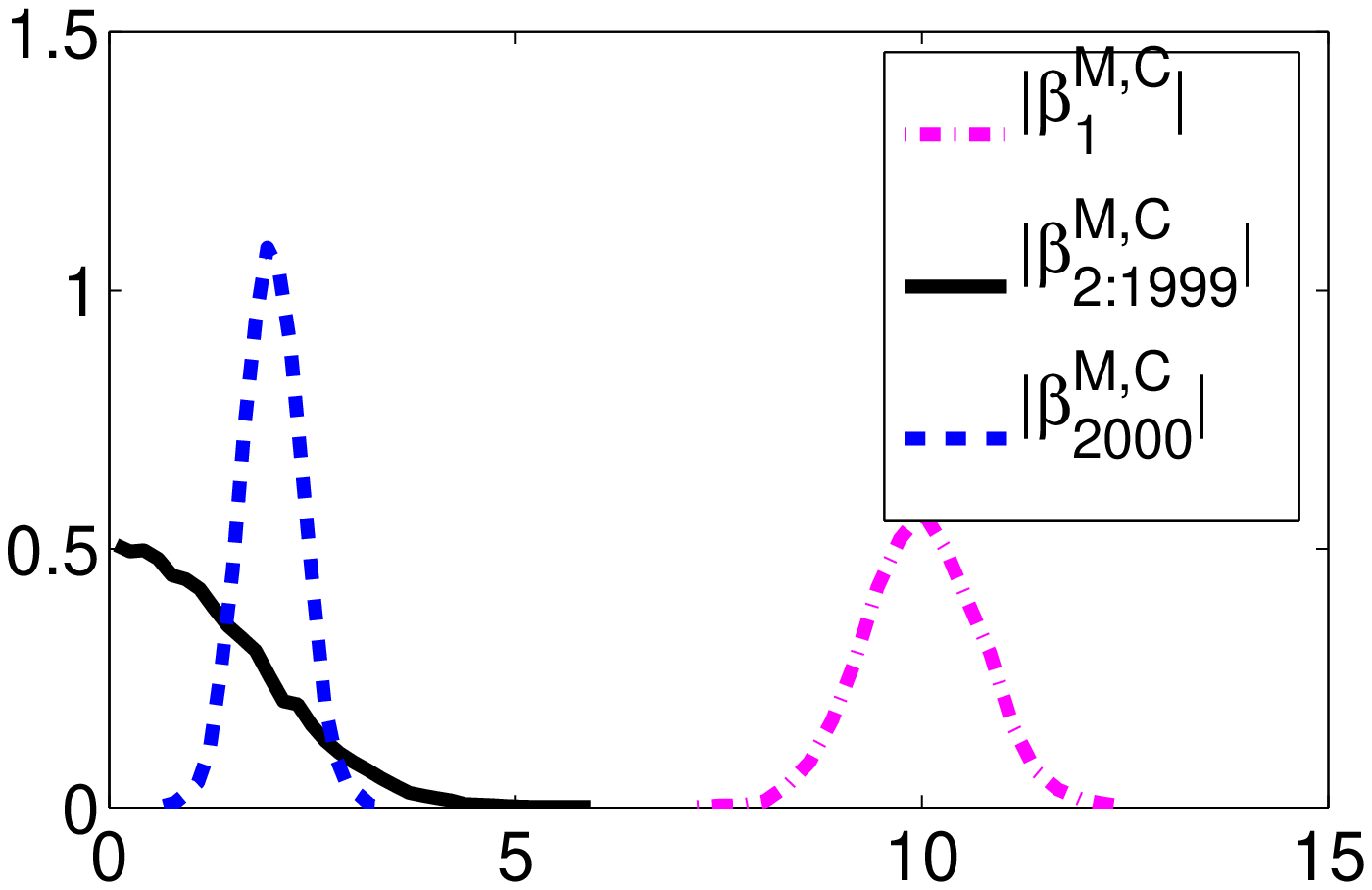}\label{Fig2d}
}
\caption{Benefits of conditioning against false positives.  Upper left panel:
the distributions of the magnitude $|\hat{\beta}_j^{M}|$ of marginal regression coefficients over three groups of variables 1, 2:1999 and 2000.  Upper right panel:  the distributions of the magnitude $|\hat{\beta}_{\mC j}^{M}|$ of conditional marginal regression coefficients over two groups of variables: 2:1999 and 2000. Lower left panel: the distributions of the magnitudes  $|\hat{\beta}^{M}_{\mC j}|$ of conditional marginal regression coefficients when five inactive variables are included in the conditioned set. Lower right panel: the distributions of the averages of the magnitude $|\hat{\beta}^{M}_{\mC j}|$ of conditional marginal regression coefficients given ten randomly chosen variables.}
\label{Fig2}
\end{figure}
Secondly, conditional screening helps for reducing the number of false negatives.  Marginal screening can fail when there are covariates in the non-active set that are highly correlated with active variables. To appreciate this, consider the linear model (\ref{eq1}) again with sparse regression coefficients $\bbeta^\star = (10, 0, \cdots, 0, 1)^T$, equi-correlation 0.9 among all covariates except $X_{2000}$, which is independent of the rest of the covariates. This setting gives
\[
 \Cov(X_1,Y)=10, \quad \Cov(X_{2000},Y)=1, \quad \mbox{and} \quad \Cov(X_j,Y)=9 \quad \mbox{for $j\neq 1,2000$}.
\]
In this case, marginal utilities for all nonactive variables are higher than that for the active variable $X_{2000}$.  A summary similar to Figure~\ref{Fig1} is shown in the upper left panel of Figure~\ref{Fig2}.  Therefore, based on SIS (sure independence screening) in Fan and Lv (2008), the active variable $X_{2000}$ has the least priority to be included.   By using the conditional screening approach in which the covariate $X_1$ is conditioned upon (used in the joint fit), marginal utilities of the spurious variables are significantly reduced.  The distributions of the average of the magnitude of the conditional fitted coefficients $\{|\hat{\beta}_{\mC j}^{M}|\}_{j=2}^{1999}$  and $|\beta_{\mC 2000}^{M}|$ are shown in the middle panel of Figure~\ref{Fig2}.  Clearly, the nonactive variables are significantly demoted by conditioning. 
To observe effects of conditioning on extra variables and randomly chosen variables, a similar experiment to the first case is also done. Figure~\ref{Fig2c} depicts the distribution of the conditioned marginal fits when five extra variables are conditioned on. The contributions of variables $X_j$ in the presence of ten randomly chosen variables are given in Figure~\ref{Fig2d}. It is seen that, the relative magnitude of the hidden active variable $X_{2000}$ is considerably larger and hence it is more likely that it is recruited during screening.

Finally, as shown by \cite{SIS} and \cite{SISGLM}, for a given threshold of marginal utility,  the size of the selected variables depends on the correlation among covariates, as measured by the largest eigenvalue of $\bSigma$: $\lambda_{\max} \left(\bSigma\right)$.  The larger the quantity, the more variables have to be selected in order to have a sure screening property.  By using conditional screening, the relevant quantity now becomes $\lambda_{\max} \left(\bSigma_{\bsX_{\mD}|\bsX_{\mC}}\right)$, where $\bX_{\mC}$ refers to the $q$ covariates that we will condition upon and $\bX_{\mD}$ is the rest of the variables.  Conditioning helps reducing correlation among covariates $\bX_{\mD}$.  This is particularly the case when covariates $\bX$ share some common factors, as in many biological (e.g. treatment effects) and financial studies (e.g. market risk factors). To illustrate the benefits we consider the case where $\bX$ is given by equally correlated normal random variables. Simple calculations yield that $\lambda_{\max} \left(\bSigma_{\bsX_{\mD}}\right)=(1-r)+rd$ where $r$ is the common correlation
and $d = p-q$. As $\bX$ has a normal distribution, the conditional covariance matrix can be calculated easily and it can be shown that
\begin{equation} \label{eq2}
  \lambda_{\max} \left(\bSigma_{\bsX_{\mD}|\bsX_{\mC}}\right)=(1-r)+rd\frac{1-r}{1-r+rq}.
\end{equation}
Note that when $q = 0$, the formula reduces to the unconditional one.  It is clear that conditioning helps reducing the correlation among the variables.
To quantify the degree of de-correlation, Figure~\ref{Fig3} depicts the ratio
$\lambda_{\max} \left(\bSigma_{\bsX_{\mD}}\right) / \lambda_{\max} \left(\bSigma_{\bsX_{\mD}|\bsX_{\mC}}\right)$ as a function of $r$ for various choices of $q$ when $d=1000$.  The reduction is dramatic, in particular when $r$ is large or $q$ is large. The benefits of conditioning are clearly evidenced.

\begin{figure}[ht]
\centering
\includegraphics[width=0.8\textwidth]{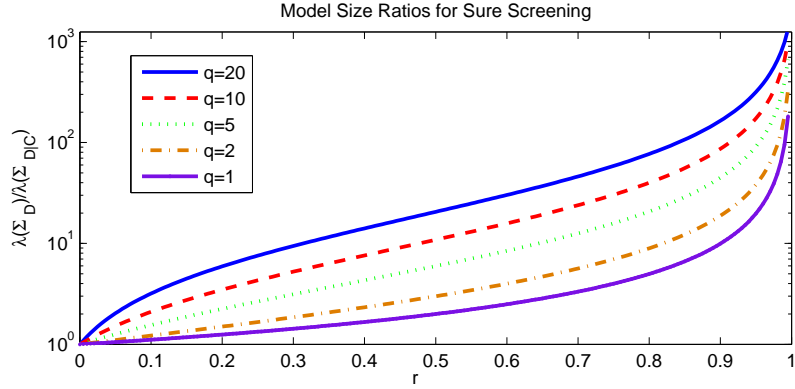}
\caption{Ratio of maximum eigenvalues of  unconditioned and conditioned covariance matrix.}
\label{Fig3}
\end{figure}

In this paper, we propose the conditional screening technique and formally establish the conditions under which it has a sure screening property.  We also give an upper bound for the number of selected variables for each given threshold value. Two data-driven methods for choosing the thresholding parameter are proposed to facilitate the practical use of the conditional screening technique.

The rest of the paper is organized as follows. In Section 2, we introduce the conditional sure independence screening procedure. The sure independence screening property and the uniform convergence of the conditional marginal maximum likelihood estimator are presented in Section 3. In Section 4, two approaches are proposed to choose the thresholding parameter for CSIS. Finally, we examine the performance of our procedure in Section 5 on simulated and real data. The details of the proofs are deferred to the Appendix.

\section{CONDITIONAL INDEPENDENCE SCREENING}

\subsection{Generalized Linear Models}

Generalized linear models assume that the conditional probability density of the random variable $Y$ given $\bX=\bx=(x_1\ldots,x_p)^T$ belongs to an exponential family
\begin{equation} \label{eq3}
f(y|\bx;\theta)=\exp\Big(y\theta(\bx) - b(\theta(\bx))+c(\bx;y)\Big),
\end{equation}
where $b(\cdot)$ and $c(\cdot)$ are specific known functions in the canonical parameter $\theta(\bx)$. Note that we ignore the dispersion parameter $\phi$,
since the interest only focuses on estimation of the mean regression function. However, it is easy to include a dispersion parameter $\phi$.  Under model (\ref{eq3}), we have the regression function
\[
\E(Y|\bX=\bx)=b'(\theta(\bx)).
\]
The canonical parameter is further parameterized as
\[
     \theta(\bx) = \bx^T \bbeta^\star,
\]
namely the canonical link is used in modeling the mean regression function.
Well known distributions in this exponential family include the normal, binomial, Poisson, and Gamma distributions.

In the ultrahigh dimensional sparse linear model, we assume that the true parameter $\bbeta^{\star}=(\beta_{1}^{\star},\ldots,\beta_{p}^{\star})^T$ is sparse.   Namely, the set
\[
   \mathcal{M}_{\star}=\{j=1,\ldots,p:\beta_{j}^{\star}\neq0\},
\]
is small. Our aim is to estimate the set $\mathcal{M}_{\star}$ and coefficient vector $\bbeta^\star$, as well as predicting the outcome $Y$.  This is a more challenging task than just predicting $Y$ as in many machine learning problems.
When the dimensionality is ultrahigh, one often employs a screening technique first to reduce the model size. It is particularly effective in distributed computation for dealing with ``Big Data''.

\subsection{Conditional Screening}

Conditional screening assumes that there is a set of variables $\bX_{\mC}$ that are known to be related to the response $Y$ and we wish to recruit additional variables from the rest of variables, given by $\bX_{\mD}$, to better explain the response variable $Y$.  For simplicity of notation, we assume without loss of generality that $\mC$ is the set of first $q$ variables and $\mD$ is the remaining set of $d = p-q$ variables.  We will use the notation
\[
   \bbeta_{\mC}=(\beta_{1},\ldots,\beta_{q})^{T}\in\R^{q}, \quad
   \mbox{and} \quad \bbeta_{\mD}=(\beta_{q+1},\ldots,\beta_{p})^{T}\in\R^{d},
\]
and similar notation for $\bX_{\mC}$ and $\bX_{\mD}$.

Assume without loss of generality that the covariates have been standardized so that
\[
\E(X_{j})=0\quad\text{and}\quad \E(X_{j}^{2})=1\quad\text{for}\ j \in \mD.
\]

Given a random sample $\{(\bX_{i},Y_{i})\}_{i=1}^n$ from the generalized linear model (\ref{eq3}) with the canonical link, the conditional maximum marginal likelihood estimator $\hat{\bbeta}_{\mC j}^{M}$ for $j=q+1,\ldots,p$ is defined as the minimizer of the (negative) marginal log-likelihood
\begin{equation}\label{eq4}
\hat{\bbeta}_{\mC j}^{M} =
\argmin_{\bsbeta_{\mC},\beta_{j}}\mathbb{P}_{n} \bigl \{l(\bX_{\mC}^{T}\bbeta_{\mC}+
X_{j}\beta_{j},Y) \bigr \},
\end{equation}
where $l(\theta,Y)= b(\theta) - \theta Y$ and $\mathbb{P}_{n} f(X,Y)=n^{-1}\sum_{i=1}^{n}f(X_{i},Y_{i})$ is the empirical measure. Denote from now on by $\hat{\beta}_{j}^M$  the last element of $\hat{\bbeta}_{\mC j}^M$.  It measures the strength of the conditional contribution of $X_j$ given $\bX_{\mC}$.  In the above notation, we assume that the intercept is used and is incorporated in the vector $\bX_{\mC}$.
Conditional marginal screening based on the estimated marginal magnitude is to keep the variables
\begin{equation}  \label{eq5}
     \hat{\mathcal{M}}_{\mathcal{D},\gamma}=\{ j \in \mD:  |\hat{\beta}_{j}^{M}|>\gamma\},
\end{equation}
for a given thresholding parameter $\gamma$.  Namely, we recruit variables with large additional contribution given $\bX_{\mC}$.  This method will be referred to as conditional sure independence screening (CSIS).  It depends, however, on the scale of $\E_L (X_j|\bX_{\mC})$ and
$\E_L (Y|\bX_{\mC})$ to be defined in Section 3.1.  A scale-free method is to use the likelihood reduction of the variable $X_j$ given $\bX_{\mC}$, which is equivalent to computing
\begin{equation} \label{eq6}
\hat{R}_{\mC j} = \min_{\bsbeta_{\mC},\beta_{j}}\mathbb{P}_{n} \bigl \{l(\bX_{\mC}^{T}\bbeta_{\mC}+ X_{j}\beta_{j},Y) \bigr \},
\end{equation}
after ignoring the common constant $\min_{\bsbeta_{\mC}}\mathbb{P}_{n} \bigl \{l(\bX_{\mC}^{T}\bbeta_{\mC},Y) \bigr \}$.  The smaller $\hat{R}_{\mC j}$, the more  the variable $X_j$ contributes in presence of $\bX_{\mC}$.  This leads to an alternative method based on the likelihood ratio statistics: recruit additional variables according to
\begin{equation}  \label{eq7}
     \tilde{\mathcal{M}}_{\mathcal{D},\tilde{\gamma}}=\{ j \in \mD:  \hat{R}_{\mC j} < \tilde \gamma\},
\end{equation}
where $\tilde \gamma$ is a thresholding parameter.  This method will be referred to as conditional maximum likelihood ratio screening (CMLR).

We emphasize that, the set of variables $\bX_{\mC}$ does not necessarily have to contain active variables. Conditional screening only makes use of the fact that the effects of important variables are more visible in the presence of $\bX_{\mC}$ and the correlations of variables are weakened upon conditioning. This is commonly the case in many applications such as finance and biostatistics, where the variables share some common factors.  It gives hidden signature variables a chance to survive. In fact, it was demonstrated in the introduction that conditioning can be beneficial even if the set $\bX_{\mC}$ is chosen randomly.  Our theoretical study gives a formal justifications of the iterated method proposed in Fan and Lv (2008) and Fan {\em et. al.} (2009).

\section{SURE SCREENING PROPERTIES}

In order to prove the sure screening property of our method, we first need some properties on the population level.  Let $\bbeta_{\mC j}=(\bbeta_{\mC}^{T},\beta_{j})^{T}$, $\bX_{\mC j}=(\bX_{\mC}^{T},X_{j})^{T}$, and
\begin{equation}\label{eq8}
\bbeta_{\mC j}^{M} =\argmin_{\bsbeta_{\mC},\beta_{j}}
\E l(\bX_{\mC}^{T}\bbeta_{\mC}+X_{j}\beta_{j},Y),
\end{equation}
with the expectation taken under the true model.  Then, $\bbeta_{\mC j}^{M}$ is the population version of $\hat \bbeta_{\mC j}^{M}$.  To establish the sure screening property, we need to show that the marginal regression coefficient $\beta_{j}^{M}$, the last component of $\bbeta_{\mC j}^{M}$, provides useful probes for the variables in the joint model $\mathcal{M}_\star$ and
its sample version $\hat \beta_{ j}^{M}$ is uniformly close to the population counterpart $\beta_{ j}^{M}$.  Therefore, the vector of marginal fitted regression coefficients $\hat \bbeta_{\mC j}^{M}$ is useful for finding the variables in $\mathcal{M}_\star$.

\subsection{Properties on Population Level}

Since we are fitting $d$ marginal regressions, that is we are using only $q+1$ out of the $p$ original predictors, we need to introduce model misspecifications.  Thus, we do not expect that the marginal regression coefficient $\beta_{j}^{M}$ is equal to the joint regression parameter $\beta_j^\star$.  However, we hope that when the joint regression coefficient $|\beta_j^\star|$ exceeds a certain threshold,  $|\beta_{j}^{M}|$ exceeds another threshold in most cases.  Therefore, the marginal conditional regression coefficients provide useful probes for the joint regression.

By (\ref{eq8}), the marginal regression coefficients $\bbeta_{\mC j}^M$ satisfy the score equation
\begin{equation}  \label{eq9}
        \E b'(\bX_{\mC j}^T \bbeta_{\mC j}^M ) \bX_{\mC j} = \E Y \bX_{\mC j} = \E b'(\bX^T \bbeta^{\star}) \bX_{\mC j},
\end{equation}
where the second equality follows from the fact that $\E(Y|\bX ) = b'(\bX^T \bbeta^{\star})$.  Without using the additional variable $X_j$, the baseline parameter is given by
\begin{equation}\label{eq10}
\bbeta_{\mC}^{M} =\argmin_{\bsbeta_{\mC}} \E l(\bX_{\mC}^{T}\bbeta_{\mC},Y),
\end{equation}
and satisfies the equation
\begin{equation}  \label{eq11}
        \E b'(\bX_{\mC}^T \bbeta_{\mC }^M ) \bX_{\mC} = \E Y \bX_{\mC} = \E b'(\bX^T \bbeta^{\star}) \bX_{\mC}.
\end{equation}
We assume that the problems at marginal level are fully identifiable, namely, the solutions $\bbeta_{\mC}^M$  and  $\bbeta_{\mC j}^M$ are unique.

To understand the conditional contribution, we introduce the concept of the conditional linear expectation.  We use the notation
\begin{equation} \label{eq12}
\E_L (Y|\bX_{\mC}) = b'(\bX_{\mC}^{T}\bbeta_{\mC}^M), \quad \mbox{and} \quad \E_L (Y|\bX_{\mC j}) = b'(\bX_{\mC j}^{T}\bbeta_{\mC j}^M),
\end{equation}
which is the best linearly fitted regression within the class of linear functions.  Similarly, we use the notation $\E_L (X_j|\bX_{\mC})$ to denote the best linear regression fit of $X_j$ by using $\bX_{\mC}$.  Then, equation (\ref{eq11}) can be more intuitively expressed as
\begin{equation} \label{eq13}
    \E (Y - \E_L (Y | \bX_{\mC}) ) \bX_{\mC} = 0.
\end{equation}
Note that the conditioning in this paper is really a conditioning linear fit and the conditional expectation is really the conditional linear expectation.  This facilitates the implementation of the conditional (linear) screening in high-dimensional, but adds some technical challenges in the proof.

Let us examine the implication marginal signal, i.e. $\beta_{j}^M$.  When $\beta_j^M=0$, by (\ref{eq9}), the first $q$ components of $\bbeta_{\mC j}^M$, denoted by $\bbeta_{\mC j 1}^M$, should be equal to $\bbeta_{\mC}^M$ by uniqueness of equation (\ref{eq11}). Then, equation (\ref{eq9}) on the component $X_j$ entails
\[
   \E b'(\bX_{\mC}^T \bbeta_{\mC}^M ) X_{j} = \E Y X_{j}, \quad \mbox{or} \quad
   \E X_j (Y - \E_L (Y | \bX_{\mC}) ) = 0.
\]
Using (\ref{eq13}), the above condition can be more comprehensively expressed as
\begin{equation}  \label{eq14}
  \Cov_L \left(Y,X_{j}\big|\mathbf{X_{\mathcal{C}}}\right) \equiv \E (X_j - \E_L (X_j | \bX_{\mC}))  (Y - \E_L (Y | \bX_{\mC}) ) = 0.
\end{equation}
This proves the necessary condition of the following theorem.

\begin{thm}\label{thm1}
For $j \in \mD$, the marginal regression parameters $\beta_{j}^{M}=0$
if and only if $\Cov_L\left(Y,X_{j}\big|\mathbf{X_{\mathcal{C}}}\right)=0$.
\end{thm}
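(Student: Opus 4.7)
The \emph{only if} direction is essentially already established in the paragraph preceding the theorem statement, so my plan focuses on the \emph{if} direction: assuming $\Cov_L(Y, X_j \mid \bX_{\mC}) = 0$, I aim to deduce $\beta_j^M = 0$. The strategy is to exhibit an explicit candidate minimizer and then invoke the assumed uniqueness of $\bbeta_{\mC j}^M$. Take the candidate $\tilde{\bbeta}_{\mC j} = ((\bbeta_{\mC}^M)^T, 0)^T$ and verify the population score equation (\ref{eq9}) block by block. The block corresponding to $\bX_{\mC}$ reduces to $\E b'(\bX_{\mC}^T \bbeta_{\mC}^M) \bX_{\mC} = \E Y \bX_{\mC}$, which holds by the very definition (\ref{eq11}) of the baseline parameter $\bbeta_{\mC}^M$. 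The remaining scalar equation becomes $\E b'(\bX_{\mC}^T \bbeta_{\mC}^M) X_j = \E Y X_j$, equivalently $\E X_j (Y - \E_L(Y \mid \bX_{\mC})) = 0$.

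The key step is to show that this one-sided identity is exactly the hypothesis $\Cov_L(Y, X_j \mid \bX_{\mC}) = 0$. Writing $\E_L(X_j \mid \bX_{\mC}) = \bX_{\mC}^T \bgamma$ for the coefficient vector $\bgamma$ determined by the linear-projection normal equations, the orthogonality identity (\ref{eq13}) applied componentwise yields $\E \bX_{\mC}(Y - \E_L(Y \mid \bX_{\mC})) = 0$, so that $\E \E_L(X_j \mid \bX_{\mC}) (Y - \E_L(Y \mid \bX_{\mC})) = \bgamma^T \E \bX_{\mC}(Y - \E_L(Y \mid \bX_{\mC})) = 0$. Subtracting this vanishing quantity from $\E X_j (Y - \E_L(Y \mid \bX_{\mC}))$ shows that the latter equals $\Cov_L(Y, X_j \mid \bX_{\mC})$, which is zero by assumption. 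Thus $\tilde{\bbeta}_{\mC j}$ satisfies (\ref{eq9}), and by uniqueness $\bbeta_{\mC j}^M = \tilde{\bbeta}_{\mC j}$, forcing $\beta_j^M = 0$.

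The only real subtlety, and hence the main (rather mild) obstacle, is that $\E_L$ here denotes a linear projection rather than a genuine conditional expectation, so standard tower-property arguments cannot be invoked wholesale. One must instead exploit the linearity of $\E_L(X_j \mid \bX_{\mC})$ in $\bX_{\mC}$ together with (\ref{eq13}) to convert the asymmetric expression $\E X_j (Y - \E_L(Y \mid \bX_{\mC}))$ into the symmetric $\Cov_L(Y, X_j \mid \bX_{\mC})$. Once this identification is secured, the rest of the argument is a direct verification that a candidate solves the score equation, followed by an appeal to identifiability.
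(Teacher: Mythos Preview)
Your proof is correct and follows essentially the same approach as the paper: verify that $((\bbeta_{\mC}^M)^T,0)^T$ satisfies the score equation (\ref{eq9}) and invoke uniqueness. You spell out a bit more explicitly why $\E X_j(Y-\E_L(Y\mid\bX_{\mC}))=\Cov_L(Y,X_j\mid\bX_{\mC})$ via the linearity of $\E_L(X_j\mid\bX_{\mC})$ and (\ref{eq13}), whereas the paper simply points back to the discussion in Section~3.1, but the argument is the same.
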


Proof of the sufficient part is given in Appendix \ref{app thm1}.
In order to have the sure screening property at the population level of equation (\ref{eq8}), the important variables $\{X_j, j \in \mathcal{M}_{\star\mathcal{D}}\}$ should be conditionally correlated with the response, where $\mathcal{M}_{\star\mathcal{D}} = \mathcal{M}_{\star} \cap \mathcal{D}$.
Moreover, if $X_j$ (with $j \in \mathcal{M}_{\star\mathcal{D}}$) is conditionally correlated with the response, the regression coefficient $\beta_j^M$ is non-vanishing. The sure screening property of conditional MLE (CMLE), given by equation \eqref{eq5}, will be guaranteed if the minimum marginal signal strength  is stronger than the estimation error.  This will be shown in Theorem \ref{thm2} and requires Condition \ref{cond1}. The details of the proof are relegated to Appendix \ref{app thm2}.

\begin{condition}\label{cond1}
\qquad \vspace{0mm}
\begin{enumerate}
\item [(i)] For $j\in\mathcal{M}_{\star\mathcal{D}}$, there exists a positive constant $c_{1}>0$ and $\kappa<1/2$ such that $\left|\Cov_L(Y,X_{j}|\bX_{\mC})\right|\geq c_{1} n^{-\kappa}$.

\item [(ii)] Let $m_j$ be the random variable defined by
\[
m_j=\frac{b' \lp \bX_{\mC j}^T\bbeta_{\mC j}^M\rp-b'\lp\bX_{\mC}^T\bbeta_{\mC}^M\rp}{\bX_{\mC j}^T\bbeta_{\mC j}^M- \bX_{\mC}^T\bbeta_{\mC}^M}.
\]
Then, $ \E m_j X_j^2 \leq c_2$ uniformly in $j=q+1,\ldots,p$.
\end{enumerate}
\end{condition}

Note that, by strict convexity of $b(\theta)$, $m_j>0$ almost surely.
When we are dealing with linear models, i.e. $b(\theta)=\theta^2/2$, then $m_j=1$ and Condition ~\ref{cond1}(ii) requires that $\mathbb{E} X_j^2$ is  bounded uniformly, which is automatically satisfied
by the normalization condition $\E X_j^2 = 1$.

\begin{thm}\label{thm2}
If Condition \ref{cond1} holds, then there exists a $c_3>0$ such that
\[
\min_{j\in\mathcal{M}_{\mathcal{D}\star}}\left|\beta_{j}^{M}\right|\geq c_{3}n^{-\kappa}.
\]
\end{thm}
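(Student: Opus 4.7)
The plan is to derive an explicit formula for $\beta_j^M$ in terms of $\Cov_L(Y, X_j\mid \bX_{\mC})$, and then apply the two parts of Condition~\ref{cond1} to bound it from below. I will combine the two first-order optimality conditions, (\ref{eq9}) and (\ref{eq11}), linearize the difference of $b'$'s via the mean-value random variable $m_j$, and then eliminate the nuisance direction $\bbeta_{\mC j 1}^M - \bbeta_{\mC}^M$ to isolate $\beta_j^M$.

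First I would subtract (\ref{eq11}) from the $\bX_{\mC}$-components of (\ref{eq9}) to obtain the population moment condition
\[
\E\bigl[b'(\bX_{\mC j}^T\bbeta_{\mC j}^M) - b'(\bX_{\mC}^T\bbeta_{\mC}^M)\bigr]\bX_{\mC} = 0,
\]
and do the same for the $X_j$-component. Using the definition of $m_j$ in Condition~\ref{cond1}(ii) and writing $\bdelta = \bbeta_{\mC j 1}^M - \bbeta_{\mC}^M$ and $\beta = \beta_j^M$, these become the linear (in $\bdelta,\beta$) identities
\[
\E m_j\bX_{\mC}\bX_{\mC}^T\,\bdelta + \beta\,\E m_j\bX_{\mC} X_j = 0,
\]
\[
\E m_j X_j\bX_{\mC}^T\,\bdelta + \beta\,\E m_j X_j^2 = \E X_j\bigl(Y-\E_L(Y\mid\bX_{\mC})\bigr).
\]
As argued in the paper's derivation of Theorem~\ref{thm1}, the right-hand side of the second equation equals $\Cov_L(Y,X_j\mid\bX_{\mC})$, because $Y-\E_L(Y\mid\bX_{\mC})$ is orthogonal to every linear function of $\bX_{\mC}$ (by (\ref{eq13})) and $\E_L(X_j\mid\bX_{\mC})$ is such a linear function.

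Solving the first equation for $\bdelta$ (assuming $\E m_j\bX_{\mC}\bX_{\mC}^T$ is invertible, which holds whenever $\bX_{\mC}$ has no a.s. linear dependence, since $m_j>0$ a.s. by strict convexity of $b$) and substituting into the second gives
\[
\beta_j^M\cdot \Bigl\{\E m_j X_j^2 \;-\; \E m_j X_j\bX_{\mC}^T\,(\E m_j\bX_{\mC}\bX_{\mC}^T)^{-1}\E m_j\bX_{\mC} X_j\Bigr\}
= \Cov_L(Y,X_j\mid\bX_{\mC}).
\]
The bracketed quantity is a $m_j$-weighted residual variance of $X_j$ after projecting on $\bX_{\mC}$; in particular it is nonnegative (a Schur-complement of the PSD matrix $\E m_j\bX_{\mC j}\bX_{\mC j}^T$) and bounded above by $\E m_j X_j^2 \le c_2$ via Condition~\ref{cond1}(ii).

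Combining, for every $j\in\mathcal{M}_{\star\mathcal{D}}$,
\[
|\beta_j^M| \;\ge\; \frac{|\Cov_L(Y,X_j\mid\bX_{\mC})|}{\E m_j X_j^2}\;\ge\;\frac{c_1}{c_2}\,n^{-\kappa},
\]
using Condition~\ref{cond1}(i) in the second inequality, so one may take $c_3=c_1/c_2$. The only delicate point I expect in the write-up is verifying nondegeneracy of the weighted Gram matrix $\E m_j\bX_{\mC}\bX_{\mC}^T$ so that $\bdelta$ is well defined; this is a mild identifiability assumption implicit in the statement that $\bbeta_{\mC}^M$ and $\bbeta_{\mC j}^M$ are unique (mentioned after (\ref{eq11})), and everything else is algebraic manipulation of the score equations together with the two parts of Condition~\ref{cond1}.
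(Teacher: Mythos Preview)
Your proposal is correct and follows essentially the same route as the paper: both linearize the difference $b'(\bX_{\mC j}^T\bbeta_{\mC j}^M)-b'(\bX_{\mC}^T\bbeta_{\mC}^M)$ via $m_j$, eliminate the nuisance direction $\bbeta_{\mC j1}^M-\bbeta_{\mC}^M$ using the $\bX_{\mC}$-block of the score equations, arrive at the Schur-complement identity $\Cov_L(Y,X_j\mid\bX_{\mC}) = (\Omega_{j,j}-\Omega_{\mC,j}^T\Omega_{\mC,\mC}^{-1}\Omega_{\mC,j})\beta_j^M$, and then bound the Schur complement above by $\E m_j X_j^2\le c_2$ to obtain $c_3=c_1/c_2$. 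The paper packages the same computation in block-matrix notation $\Omega_j=\E m_j\bX_{\mC j}\bX_{\mC j}^T$, but the argument is identical.
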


\subsection{Properties on Sample Level}

In this section, we prove the uniform convergence of the conditional marginal maximum likelihood estimator and the sure screening property of the conditional sure independence screening method. In addition we provide an upper bound on the size of the set of selected variables $\hat{\mathcal{M}}_{\mathcal{D},\gamma}$.

Since the log-likelihood of a generalized linear model with the canonical link is concave, $\E(l(Y,\bX_{\mC j}^{T}\bbeta_{\mC j}))$ has a unique minimizer over $\bbeta_{\mC j}\in\mathcal{B}$ at an interior point $\bbeta_{\mC j}^{M}$, where $\mathcal{B}=\{|\beta_{1}^{M}|\leq B,\ldots,|\beta_{q}^{M}|\leq B,|\beta_{j}^{M}|\leq B\}$ is the set over which the marginal likelihood is maximized. To obtain the uniform convergence result at the sample level, a few more conditions on the conditional marginal likelihood are needed.

\begin{condition}\label{cond2}
\quad \vspace{0 mm}
\begin{enumerate}
\item [(i)] For the Fisher information $I_{j}(\bbeta_{\mC j})=\E(b''(\bX_{\mC j}^{T}\bbeta_{\mC j})\bX_{\mC j}\bX_{\mC j}^{T})$, its operator norm, $\|I_{j}(\bbeta_{\mC j})\|_{\mathcal{B}}$
is bounded, where
\[
\|I_{j}(\bbeta_{\mC j})\|_{\mathcal{B}}=\sup_{ {\footnotesize \bbeta_{\mC j}} \in\mathcal{B},\|\bx_{\mC j}\|=1}\|I_{j}(\bbeta_{\mC j})^{1/2}\bx_{\mC j}\|,
\]
and $\|\cdot\|$ is the Euclidian norm.

\item [(ii)] There exists some positive constants $r_{0},r_{1},s_{0},s_{1}$ and $\alpha$ such that for sufficiently large $t$
\[
P(|X_{j}|>t)\leq r_{1} \exp(-r_{0}t^{\alpha})\quad\text{for}\ j=1,\ldots,p
\]
and that
\[
\E\big(b(\bX^{T}\bbeta^{\star}+s_{0})-b(\bX^{T}\bbeta^{\star}))+\E\big(b(\bX^{T}\bbeta^{\star}-s_{0})-b(\bX^{T}\bbeta^{\star})\big)\leq s_{1}.
\]

\item [(iii)] The second derivative of $b(\theta)$
is continuous and positive. There exists an $\varepsilon_1>0$ such
that for all $j=q+1,\ldots,p$:
\[
\sup_{{\footnotesize \bbeta_{\mC j}}\in\mathcal{B},\|{\footnotesize \bbeta_{\mC j}} -{\footnotesize \bbeta_{\mC j}^M}\|\leq \varepsilon_1}|\E b(\bX_{\mC j}^{T}\bbeta_{\mC j})I(|X_{j}|>K_{n})|\leq o(n^{-1}),
\]
where $I(\cdot)$ is the indicator function and $K_n$ is an arbitrarily large constant such that for a given $\bbeta$ in $\mathcal{B}$, the function $l (\bx^T \bbeta, y)$ is Lipschitz for all $(\bx, y)$ in $\Lambda_n=\left\{ \mathbf{x},y:\left\Vert \mathbf{x}\right\Vert _{\infty}\leq K_{n}, |y|\leq K_{n}^{\star}\right\}$ with
$K_n^* = r_0 K_n^\alpha/s_0$.

\item [(iv)] For all $\bbeta_{\mC j}\in\mathcal{B}$, we have
\[
\E\big(l(\bX_{\mC j}^{T}\bbeta_{\mC j},Y)-l(\bX_{\mC j}^{T}\bbeta_{\mC j}^{M},Y)\big)\geq V\|\bbeta_{\mC j}-\bbeta_{\mC j}^{M}\|^{2},
\]
 for some positive $V$, bounded from below uniformly over $j=q+1,\ldots,p$.

\end{enumerate}
\end{condition}

The first three conditions given in Condition \ref{cond2} are satisfied for almost all of the commonly used generalized linear models. Examples include linear regression, logistic regression, and Poisson regression. The first part of Condition \ref{cond2}(ii) puts an exponential bound on the tails of $X_j$.

In the following theorem, the uniform convergence of our conditional marginal maximum likelihood estimator is stated as well as the sure screening property of the procedure. The proof of this theorem is deferred to Appendix \ref{app thm3}.

\begin{thm}\label{thm3}
Suppose that Condition \ref{cond2} holds. Let $k_n = b'(K_n B(q+1)) + r_0 K_n^{\alpha}/s_0$, with $K_n$ given in Condition \ref{cond2}.
\begin{enumerate}
\item [(i)] If $n^{1-2\kappa}k_{n}^{-2}K_{n}^{-2}\rightarrow\infty$, then for
any $c_{3}>0$, there exists a positive constant $c_{4}$ such that
\begin{eqnarray*}
    & & \mathbb{P}\left(\max_{q+1\leq j\leq p}|\hat{\beta}_{j}^{M}-\beta_{j}^{M}|\geq c_{3}n^{-\kappa}\right) \\
    & \leq & d \exp\big(-c_{4}n^{1-2\kappa}(k_{n}K_{n})^{-2}\big) +d nr_{2}\exp\big(-r_{0}K_{n}^{\alpha}\big),
\end{eqnarray*}
where $r_2 = q r_1 + s_1$.

\item [(ii)] If in addition, Condition \ref{cond1} holds, then by taking
$\gamma=c_{5}n^{-\kappa}$ with $c_{5}\leq c_{3}/2$, we have
\[
\mathbb{P}\left(\mathcal{M}_{\star\mathcal{D}}\subset\hat{\mathcal{M}}_{\mathcal{D},\gamma}\right)
 \geq 1-s \exp\big(-c_{4}n^{1-2\kappa}(k_{n}K_{n})^{-2}\big)- nr_{2} s \exp\big(-r_{0}K_{n}^{\alpha}\big),
\]
for some constant $c_5$, where $s=\left|\mathcal{M}_{\star\mathcal{D}}\right|$ the size of the set of nonsparse elements.
\end{enumerate}
\end{thm}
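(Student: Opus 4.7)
The plan is to handle part (i) by controlling, uniformly in $j=q+1,\dots,p$, a standard M-estimation problem via strong convexity plus concentration, and then to derive part (ii) as an immediate consequence of part (i) combined with Theorem \ref{thm2}.

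For part (i), fix $j$ and set $\varepsilon=c_3 n^{-\kappa}$. Condition \ref{cond2}(iv) gives quadratic growth: $\E[l(\bX_{\mC j}^T \bbeta_{\mC j},Y)-l(\bX_{\mC j}^T \bbeta_{\mC j}^M,Y)]\geq V\|\bbeta_{\mC j}-\bbeta_{\mC j}^M\|^2$. By convexity of the empirical criterion and the definition of $\hat\bbeta_{\mC j}^M$ as its minimizer, I would argue as in \citet{SISGLM} that the event $\{\|\hat\bbeta_{\mC j}^M-\bbeta_{\mC j}^M\|\geq\varepsilon\}$ is contained in the event that $\mathbb{P}_n l(\cdot,\cdot)-\E l(\cdot,\cdot)$ fluctuates by at least $V\varepsilon^2$ somewhere on the sphere of radius $\varepsilon$ centered at $\bbeta_{\mC j}^M$. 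In particular, since $|\hat\beta_j^M-\beta_j^M|\leq \|\hat\bbeta_{\mC j}^M-\bbeta_{\mC j}^M\|$, controlling the latter suffices. Thus the problem reduces to bounding, uniformly over a small ball, the centered empirical process $(\mathbb{P}_n-\E)[l(\bX_{\mC j}^T\bbeta_{\mC j},Y)-l(\bX_{\mC j}^T\bbeta_{\mC j}^M,Y)]$.

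To obtain the exponential tail, I would truncate onto $\Lambda_n=\{\|\bX\|_\infty\leq K_n,\,|Y|\leq K_n^\star\}$. Condition \ref{cond2}(iii) ensures that on $\Lambda_n$ the map $\bbeta\mapsto l(\bx^T\bbeta,y)$ is Lipschitz with constant of order $k_n K_n$, since $|\partial_\theta l|=|b'(\theta)-y|\leq k_n$ on $\Lambda_n$ and $\|\bX_{\mC j}\|_\infty\leq K_n$. On $\Lambda_n$ each summand of the empirical process is therefore bounded by $O(\varepsilon k_n K_n)$, so Hoeffding (or Bernstein) with a covering argument over the radius-$\varepsilon$ ball in $\R^{q+1}$ yields a bound of order $\exp(-c_4 n\varepsilon^2/(k_n K_n)^2)=\exp(-c_4 n^{1-2\kappa}(k_n K_n)^{-2})$. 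The bias from restricting to $\Lambda_n$ is negligible by Condition \ref{cond2}(iii), while the probability of $\Lambda_n^c$ is controlled through Condition \ref{cond2}(ii): the tail bound on $|X_j|$ contributes $q r_1\exp(-r_0 K_n^\alpha)$, and the moment generating bound on $b(\bX^T\bbeta^\star\pm s_0)$ controls the tail of $Y$ through a standard Markov/Chernoff argument at level $K_n^\star=r_0 K_n^\alpha/s_0$, producing the $s_1$ contribution in $r_2$. Summing over $n$ observations and taking a union bound over the $d$ indices in $\mD$ gives both terms in the claimed probability bound. The main obstacle here is coordinating the truncation level $K_n$ with the Lipschitz constant $k_n K_n$ so that (a) the Lipschitz property of Condition \ref{cond2}(iii) genuinely applies on $\Lambda_n$, (b) the truncation bias is $o(n^{-1})$ so it does not corrupt the quadratic-growth lower bound, and (c) the resulting variance-driven exponential rate dominates the truncation error term.

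For part (ii), Theorem \ref{thm2} supplies $\min_{j\in\mathcal{M}_{\star\mD}}|\beta_j^M|\geq c_3 n^{-\kappa}$ under Condition \ref{cond1}. Applying part (i) with constant $c_3/2$ in place of $c_3$ (which absorbs into the constants $c_4$ and $c_5$), on the intersection event $\{\max_{q+1\leq j\leq p}|\hat\beta_j^M-\beta_j^M|< c_3 n^{-\kappa}/2\}$ the triangle inequality gives, for every $j\in\mathcal{M}_{\star\mD}$, $|\hat\beta_j^M|\geq c_3 n^{-\kappa}-c_3 n^{-\kappa}/2=c_3 n^{-\kappa}/2\geq c_5 n^{-\kappa}=\gamma$, hence $j\in\hat{\mathcal{M}}_{\mD,\gamma}$. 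Since only the coordinates $j\in\mathcal{M}_{\star\mD}$ matter for containment, I would repeat the union-bound step over the $s=|\mathcal{M}_{\star\mD}|$ active indices (instead of all $d$), which replaces the leading factor $d$ in the probability bound of (i) by $s$, giving the stated inequality.
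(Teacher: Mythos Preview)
Your proposal is correct and follows the same route as the paper. The only difference is packaging: the paper restates an exponential tail bound for quasi-MLEs from \citet{SISGLM} as a black-box theorem in its appendix and then simply plugs in $1+t=c_3 V_n n^{1/2-\kappa}/(16k_n)$ together with the estimate $\mathbb{P}(\Lambda_n^c)\leq r_2\exp(-r_0 K_n^\alpha)$, whereas you have sketched the proof of that black box (quadratic growth from Condition~\ref{cond2}(iv), truncation to $\Lambda_n$, Lipschitz plus Hoeffding on the centered empirical process). Part~(ii) is handled identically in both---apply Theorem~\ref{thm2}, use the triangle inequality on the event $\mathcal{A}_n=\{\max_{j\in\mathcal{M}_{\star\mathcal{D}}}|\hat\beta_j^M-\beta_j^M|\leq c_3 n^{-\kappa}/2\}$, and take the union bound over the $s$ active indices only.
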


Note that the sure screening property, stated in the second conclusion of Theorem 3,
depends only on the size $s$ of the set of nonsparse elements and not on the dimensionality $d$ or $p$. This can be seen in the second conclusion above.  This result is understandable since we only need the elements in $\mathcal{M}_{\star\mathcal{D}}$ to pass the threshold, and this only requires the uniform convergence of $\hat{\beta}_j^M$ over $j \in \mathcal{M}_{\star\mathcal{D}}$.

The truncation parameter $K_n$ appears on both terms of the upper bound of the probability. There is a trade-off on this choice.  For the Bernoulli model with logistic link, $b'(\cdot)$ is bounded and the optimal order for $K_n$ is $n^{(1-2 \kappa)/(\alpha+2)}$.
In this case, the conditional sure independence screening method can handle the dimensionality
\[
    \log d = o\left(n^{(1-2\kappa)\alpha/(\alpha+2)}\right),
\]
which guarantees that the upper bound in Theorem~\ref{thm3} converges to zero.
A similar result for unconditional screening is shown in \cite{SISGLM}. In particular, when the covariates are bounded, we can take $\alpha = \infty$, and when covariates are normal, we have that $\alpha = 2$.  For the normal linear model, following the same argument as in \cite{SISGLM},  the optimal choice is $K_n=n^{\lp 1-2 \kappa \rp/A}$ where $A=\max \{\alpha+4, 3 \alpha +2\}$.
Then, conditional sure independence screening can handle dimensionality
\[
    \log d = o \lp n^{-(1-2 \kappa)\alpha/A} \rp,
\]
which is of order $o(n^{-\lp 1- 2 \kappa \rp /4})$ when $\alpha = 2$.

We have just stated the sure screening property of our CSIS method, that is $\hat{\mathcal{M}}_{\mathcal{D},\gamma}\supset \hat{\mathcal{M}}_{\star\mD}$. However, a good screening method does not only possess sure screening, but also retains a small set of variables after thresholding.  Below, we give a bound on the size of the selected set of variables, under the following additional conditions.

\begin{condition}\label{cond3}
\quad \vspace{0 mm}
\begin{enumerate}
\item [(i)] The variance $\Var(\bX^{T}\bbeta^{\star})={\bbeta^{\star}}^{T}\bSigma \bbeta^{\star}$ and $b''(\cdot)$ are bounded.

\item [(ii)] The minimum eigenvalue of the matrix $\E[m_j \bX_{\mC j}\bX_{\mC j}^T]$ is larger than a positive constant, uniformly over $j$, where $m_j$ is defined in Condition~\ref{cond1}(ii).

\item [(iii)] Letting
\[
\bZ=\E \Big\{ \E \big[\bX_{\mD} | \bX_{\mC}\big] \big[\bX^T \bbeta^\star -\bX_{\mC}^T \bbeta_{\mC}^M \big] \Big \},
\]
it holds that $\|\bZ\|_2^2 =o\Big \{ {\lambda_{\max} \big(\bSigma_{\mD| \mC} \big)}\Big\}$, with $\lambda_{\max}\big(\bSigma_{\mD|\mC} \big)$ the largest eigenvalue
of $\bSigma_{\mD|\mC} = \E [\bX_{\mD} -  \E_L(\bX_{\mD} | \bX_{\mC})] [\bX_{\mD} -  \E_L(\bX_{\mD} | \bX_{\mC})]^T$.
\end{enumerate}
\end{condition}

As noted above, for the normal linear model, $b(\theta)=\theta^2/2$. Condition~\ref{cond3} (ii) requires that the minimum eigenvalue of $\E \bX_{\mC j} \bX_{\mC j}^T$ be bounded away from zero.  In general,  by strict convexity of $b(\theta)$, $m_j>0$ almost surely.  Thus,  Condition~\ref{cond3}(ii) is mild.

For the linear model with $b'(\theta) = \theta$, by (\ref{eq11}),
\[
  \E  \bX_{\mC} \bX_{\mC}^T \bbeta_{\mC }^M = \E  \bX_{\mC} \bX^T \bbeta^{\star}
\]
and hence $\bZ = 0$ since $\E_L \big[\bX_{\mD} | \bX_{\mC}\big]$ is linear in $\bX_{\mC}$ by definition.  Thus, Condition~\ref{cond3}(ii) holds automatically.

From the proof of Theorem~\ref{thm4}, without Condition~\ref{cond3}(iii), Theorem~\ref{thm4} below continues to hold with
$\bSigma_{\mD|\mC}$ replaced by $\bSigma_{\mD|\mC} + \bZ \bZ^T$.

\begin{thm}\label{thm4} Under Conditions \ref{cond2} and \ref{cond3}, we have for $\gamma=c_{6}n^{-2\kappa}$, there exists a $c_{4}>0$ such that
\begin{eqnarray*}
& &\mathbb{P}\big(|\hat{\mathcal{M}}_{\mathcal{D},\gamma}|\leq O\big(n^{2\kappa}\lambda_{\max}\big(\bSigma_{\mD|\mC} \big)\big)\big)\\ &\geq & 1-d\Big(\exp\big(-c_{4}n^{1-2\kappa}(k_{n}K_{n})^{-2}\big)
 +nr_{2}\exp\big(-r_{0}K_{n}^{\alpha}\big)\Big).
\end{eqnarray*}

\end{thm}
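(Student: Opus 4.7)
The plan is to reduce the sample-level claim to a deterministic population-level bound on $\sum_{j \in \mD}(\beta_j^M)^2$, and then to apply a Markov/Chebyshev-type inequality together with the uniform convergence of Theorem~\ref{thm3}(i).

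\emph{Step 1: A matrix identity for $\bbeta_{\mC j}^M$.} Subtracting the first $q$ components of the score equation (\ref{eq9}) from (\ref{eq11}) and applying the mean value theorem with the coefficient $m_j$ of Condition~\ref{cond1}(ii), I would show that the vector
\[
\bu_j = \bigl(\bbeta_{\mC j,1}^M - \bbeta_\mC^M,\, \beta_j^M\bigr)^T\in\R^{q+1},
\]
where $\bbeta_{\mC j,1}^M$ denotes the first $q$ coordinates of $\bbeta_{\mC j}^M$, satisfies
\[
\E\bigl[m_j\,\bX_{\mC j}\bX_{\mC j}^T\bigr]\,\bu_j \;=\; \Cov_L(Y,X_j\mid\bX_\mC)\,\be_{q+1},
\]
because (\ref{eq11}) gives $\E\bX_\mC W_j = 0$ with $W_j = b'(\bX^T\bbeta^\star)-b'(\bX_\mC^T\bbeta_\mC^M)$, and $\E X_j(Y-\E_L(Y\mid\bX_\mC))=\Cov_L(Y,X_j\mid\bX_\mC)$. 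Condition~\ref{cond3}(ii) gives a uniform lower bound on the minimum eigenvalue of the information-type matrix, so $(\beta_j^M)^2 \le \|\bu_j\|^2 \le C_1\,|\Cov_L(Y,X_j\mid\bX_\mC)|^2$ uniformly in $j$.

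\emph{Step 2: Bounding $\sum_j |\Cov_L(Y,X_j\mid\bX_\mC)|^2$ by $\lambda_{\max}(\bSigma_{\mD\mid\mC})$.} Let $\bv=(\Cov_L(Y,X_j\mid\bX_\mC))_{j\in\mD}\in\R^d$. Using $\E\bX_\mC W = 0$, for any unit $\ba\in\R^d$ and any $\bb\in\R^q$,
\[
\ba^T\bv = \E(\ba^T\bX_\mD)W = \E\bigl(\ba^T\bX_\mD - \bb^T\bX_\mC\bigr)W.
\]
Choosing $\bb$ to be the best linear predictor of $\ba^T\bX_\mD$ from $\bX_\mC$ and applying Cauchy--Schwarz yields
\[
|\ba^T\bv| \;\le\; \sqrt{\ba^T\bSigma_{\mD\mid\mC}\ba}\,\sqrt{\E W^2} \;\le\; \sqrt{\lambda_{\max}(\bSigma_{\mD\mid\mC})\,\E W^2}.
\]
Taking the supremum over $\ba$, and using Condition~\ref{cond3}(i) (boundedness of $b''$ and of $\Var(\bX^T\bbeta^\star)$) to show $\E W^2$ is a bounded constant, I conclude $\sum_{j\in\mD}(\beta_j^M)^2 \le C_2\,\lambda_{\max}(\bSigma_{\mD\mid\mC})$. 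The term $\bZ$ of Condition~\ref{cond3}(iii) arises only if one instead splits $\bX_\mD$ via the \emph{nonlinear} conditional mean $\E[\bX_\mD\mid\bX_\mC]$; Condition~\ref{cond3}(iii) then absorbs the rank-one correction $\bZ\bZ^T$ into $\lambda_{\max}(\bSigma_{\mD\mid\mC})$, matching the post-theorem remark that without (iii) the bound holds with $\bSigma_{\mD\mid\mC}+\bZ\bZ^T$.

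\emph{Step 3: Passing to the sample.} By the definition (\ref{eq5}),
\[
\gamma^{2}\,|\hat{\mathcal{M}}_{\mD,\gamma}| \;\le\; \sum_{j\in\hat{\mathcal{M}}_{\mD,\gamma}}(\hat\beta_j^M)^2 \;\le\; 2\sum_{j\in\mD}(\beta_j^M)^2 + 2\,|\hat{\mathcal{M}}_{\mD,\gamma}|\,\max_{j\in\mD}(\hat\beta_j^M-\beta_j^M)^2.
\]
On the event furnished by Theorem~\ref{thm3}(i), the last factor is $O(n^{-2\kappa})$. With $\gamma$ of order $n^{-\kappa}$ and its constant chosen strictly larger than this error, I rearrange to obtain $|\hat{\mathcal{M}}_{\mD,\gamma}| \le C_3\,n^{2\kappa}\,\lambda_{\max}(\bSigma_{\mD\mid\mC})$, and the probability estimate is inherited directly from Theorem~\ref{thm3}(i).

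The main obstacle is Step~2: the Cauchy--Schwarz step must be coupled with the orthogonality relation in (\ref{eq11}) to recover $\bSigma_{\mD\mid\mC}$ instead of the much larger $\bSigma_\mD$, and one must carefully control $\E W^2$ via the GLM structure in a way uniform in $j$. This is also the step where the nonlinearity of $b'$ forces the $\bZ$-type correction, making Condition~\ref{cond3}(iii) the natural assumption needed to restore the clean $\lambda_{\max}(\bSigma_{\mD\mid\mC})$ bound.
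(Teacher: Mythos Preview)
Your Steps 1 and 3 are essentially the paper's argument: the matrix identity $\Omega_j\bu_j=\Cov_L(Y,X_j\mid\bX_\mC)\be_{q+1}$ together with Condition~\ref{cond3}(ii) is exactly how the paper obtains $|\beta_j^M|\le D_1|\Cov_L(Y,X_j\mid\bX_\mC)|$ (via the Schur complement, equation~(\ref{eqA5})), and your Step~3 is a minor repackaging of the paper's inclusion $\{|\hat\beta_j^M|>2\varepsilon n^{-\kappa}\}\subset\{|\beta_j^M|>\varepsilon n^{-\kappa}\}$ on the event $\mathcal B_n$.

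Step~2, however, is a genuinely different route. The paper first applies the Lipschitz bound on $b'$ to replace $W=b'(\bX^T\bbeta^\star)-b'(\bX_\mC^T\bbeta_\mC^M)$ by the linear surrogate $\bX^T\bbeta^\star-\bX_\mC^T\bbeta_\mC^M$, and only then decomposes $\bX_\mD$ through $\E_L(\bX_\mD\mid\bX_\mC)$. Because the orthogonality $\E\bX_\mC W=0$ from (\ref{eq11}) is lost once $W$ is linearized, the paper picks up the extra term $\bZ$ and needs Condition~\ref{cond3}(iii) to control it. Your argument keeps the exact $W$, exploits $\E\bX_\mC W=0$ to freely subtract $\bb^T\bX_\mC$, and then applies Cauchy--Schwarz; Lipschitz continuity enters only at the end to bound $\E W^2$. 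This is cleaner: it yields $\|\bv\|^2\le\lambda_{\max}(\bSigma_{\mD\mid\mC})\,\E W^2$ directly, and---as you correctly observe---it makes Condition~\ref{cond3}(iii) unnecessary for the stated conclusion. The only small point to make explicit is why $\E W^2=O(1)$: besides $\Var(\bX^T\bbeta^\star)=O(1)$ from Condition~\ref{cond3}(i), you also need $\E(\bX_\mC^T\bbeta_\mC^M)^2=O(1)$, which follows since $\bbeta_\mC^M\in\mathcal B$ and $q$ is fixed.
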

This theorem is proved in Appendix \ref{app thm4}.

\section{SELECTION OF THE THRESHOLDING PARAMETER}\label{sec select gamma}

In the previous section, we have shown that CSIS has the sure screening property when the thresholding level $\gamma$
is chosen such that $\gamma\propto n^{-\kappa}$. Unfortunately, in practice $\gamma$, which relates to the minimum strength of marginal signals in the data, is always unknown. Therefore, $\gamma$ has to be estimated from the data itself. Underestimating $\gamma$  will result in a lot variables after screening, which leads to a large number of false positives, and similarly overestimation of $\gamma$ will prevent  sure screening.

In this section, we present two procedures that select a thresholding level for CSIS. The first approach is based on controlling the number of false positives by bounding the false discovery rate (FDR). This method uses the fact that quasi-likelihood estimates for GLMs enjoy asymptotic normality. The second approach, that we call random decoupling, uses a resampling technique to create the null model and to measure the maximum strength of noise. In random decoupling, we use marginal regression on the null model  to obtain the marginal regression coefficients that are known to be zero. We use the maximum of these marginal coefficients of the null model as a thresholding level.

\subsection{Controlling FDR}

It is well known that quasi-maximum likelihood estimates have an asymptotically
normal distribution under general conditions (\citealt{Heyde,Gao08}). Then, for covariates $j$ such that, $\beta_j^M=0$, asymptotically it follows that
\[
\left[I_{j}\left(\hat{\beta}_{j}^{M}\right)\right]^{1/2}\hat{\beta}_{j}^{M}\sim \mathcal{N}(0,1),
\]
where $I_{j}\left(\hat{\beta}_{j}^{M}\right)$ denotes the element that corresponds to $\beta_{j}$ in the information matrix $I_j(\bbeta_{\mC j})$.

Using this property, we can build a thresholding technique that bounds the proportion of elements $j$ such that, $\beta_j^M=0$. For the case, when $\beta_j^M=0$ for all $j\in (\mathcal{M}_{\star\mD})^c$, this rate is also called the false discovery rate in \cite{SISCox} and is given by $\mathbb{E}\left(\left|\hat{\mathcal{M}}_{\mD,\delta}\cap(\mathcal{M}_{\star\mD})^c\right|/\left|(\mathcal{M}_{\star\mD})^c\right|\right)$.

By choosing $\hat{\mathcal{M}}_{\mathcal{D},\delta}=\left\{ j:I_{j}\left(\hat{\beta}_{j}^{M}\right)^{1/2}\left|\hat{\beta}_{j}^{M}\right|\geq\delta\right\} $,
the expected false discovery rate is bounded above by $2\left(1-\Phi\left(\delta\right)\right)$, where $\Phi(\cdot)$ is the distribution function of a standard normal random variable.
This approach can also be seen as a modification of the method introduced
by \cite{SISCox} for the Cox model. By setting $\delta$ to
$\Phi^{-1}\left(1-f/(2d)\right)$ where $f$ is the maximum
number of false positives we can tolerate, we obtain an expected
false positive rate that is less than $f/(d-\left|\mathcal{M}_{\star \mathcal{D}}\right|)$
as the following theorem shows. The proof of this theorem is given in Appendix \ref{app thm FDR}.

\begin{condition}\label{cond4}
\quad \vspace{0 mm}
\begin{enumerate}

\item For any $j$, let $e_i=Y_i-b'(\bX_{i,\mC j}^T \bbeta_{\mC j})$ for $i=1,\dots,n$. For a given $j$, $\Var(e_i)\geq c_6$ for some positive $c_6$ and  $i=1,\ldots,n$ and $\sup_{i \geq 1} \E |e_i|^{2+\chi} < \infty$ for some $\chi>0$.

\item For $j\in\left(\mathcal{M_{\star\mathcal{D}}}\right)^c$, we have that $\Cov_L \left(Y,X_{j}\big|\mathbf{X_{\mathcal{C}}}\right)=0$.
\end{enumerate}
\end{condition}

\begin{thm}\label{thm FDR}
Under Conditions \ref{cond2}, \ref{cond3} and \ref{cond4}, if we choose
\[
\hat{\mathcal{M}}_{\mathcal{D},\delta}=\left\{ j:I_{j}\left(\hat{\beta}_{j}^{M}\right)^{1/2}\left|\hat{\beta}_{j}^{M}\right|\geq\delta\right\},
\]
where $\delta=\Phi^{-1}\left(1-f/(2d)\right)$ and $f$
is the number of false positives that can be tolerated, then, for some constant
$c_7>0$ it holds that
\[
\mathbb{E}\left(\frac{\left|\hat{\mathcal{M}}_{\mathcal{D},\delta}\cap (\mathcal{M}_{\star\mD})^c\right|}{\left|(\mathcal{M}_{\star\mD})^c\right|}\right)\leq\frac{f}{d}+\frac{c_7}{\sqrt{n}}.
\]
\end{thm}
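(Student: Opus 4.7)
The plan is to identify the null set, establish a Berry--Esseen--type normal approximation for the standardized conditional marginal MLE under the null, and then apply a union bound that benefits from the chosen value of $\delta$.

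First, I would isolate which coefficients are truly zero at the population level. For $j\in(\mathcal{M}_{\star\mathcal{D}})^c$, Condition \ref{cond4}(ii) gives $\Cov_L(Y,X_j\mid \bX_{\mC})=0$, so by Theorem \ref{thm1} the marginal population coefficient satisfies $\beta_j^M=0$. Hence the event $\{I_j(\hat\beta_j^M)^{1/2}|\hat\beta_j^M|\ge \delta\}$ is, for null $j$, exactly a large-deviation event for an estimator that is centered at zero, and the task reduces to controlling its tail.

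Second, I would invoke a finite-sample normal approximation for $I_j(\hat\beta_j^M)^{1/2}\hat\beta_j^M$ uniformly in $j$. Combining the smoothness and information conditions in Condition \ref{cond2} (which yield a standard quasi-score expansion and consistency of the plug-in $I_j(\hat\beta_j^M)$) with the $(2+\chi)$-th moment bound on $e_i=Y_i-b'(\bX_{i,\mC j}^T\bbeta_{\mC j}^M)$ in Condition \ref{cond4}(i), the score $n^{-1/2}\sum_i e_i \bX_{i,\mC j}$ admits a Berry--Esseen bound of order $n^{-1/2}$. A one-step Taylor expansion around $\bbeta_{\mC j}^M$ together with the lower-bound on the Fisher information (supplied by Condition \ref{cond3}(ii)) then transfers this approximation to the test statistic itself, giving
\[
\sup_{t\in\mathbb{R}}\Bigl|\mathbb{P}\bigl(I_j(\hat\beta_j^M)^{1/2}\hat\beta_j^M\le t\bigr)-\Phi(t)\Bigr|\le c\,n^{-1/2},
\]
uniformly over $j\in(\mathcal{M}_{\star\mathcal{D}})^c$. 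Applied to the two-sided tail, this yields
\[
\mathbb{P}\bigl(I_j(\hat\beta_j^M)^{1/2}|\hat\beta_j^M|\ge \delta\bigr)\le 2\bigl(1-\Phi(\delta)\bigr)+\frac{c_7}{\sqrt{n}}.
\]

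Third, I would assemble the FDR bound. By the choice $\delta=\Phi^{-1}(1-f/(2d))$ we have $2(1-\Phi(\delta))=f/d$. Writing
\[
\mathbb{E}\bigl|\hat{\mathcal{M}}_{\mathcal{D},\delta}\cap(\mathcal{M}_{\star\mathcal{D}})^c\bigr|=\sum_{j\in(\mathcal{M}_{\star\mathcal{D}})^c}\mathbb{P}\bigl(I_j(\hat\beta_j^M)^{1/2}|\hat\beta_j^M|\ge \delta\bigr)\le \bigl|(\mathcal{M}_{\star\mathcal{D}})^c\bigr|\Bigl(\tfrac{f}{d}+\tfrac{c_7}{\sqrt{n}}\Bigr),
\]
and dividing by $|(\mathcal{M}_{\star\mathcal{D}})^c|$ gives exactly the claimed bound $f/d+c_7/\sqrt{n}$.

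The main obstacle will be establishing the Berry--Esseen rate uniformly in $j$. The pointwise CLT for the quasi-MLE is standard under Condition \ref{cond2}, but here we need (i) a polynomial-in-$n$ approximation rate rather than only asymptotic normality, and (ii) uniformity over a potentially growing index set $(\mathcal{M}_{\star\mathcal{D}})^c$. The $(2+\chi)$-th moment condition on $e_i$ in Condition \ref{cond4}(i) is precisely what drives the Berry--Esseen rate for the linearized score, while uniformity comes from the uniform lower bound on the information (Condition \ref{cond3}(ii)) and the uniform boundedness of $b''$ and $\Var(\bX^T\bbeta^\star)$ in Condition \ref{cond3}(i). Controlling the remainder from the Taylor expansion and from plugging in $\hat\beta_j^M$ in place of $\beta_j^M$ inside $I_j(\cdot)$ at the same $O(n^{-1/2})$ order---uniformly in $j$---is the delicate piece; once that is in hand, the rest is bookkeeping.
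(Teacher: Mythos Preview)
Your proposal is correct and follows essentially the same route as the paper: use Condition~\ref{cond4}(ii) with Theorem~\ref{thm1} to reduce to $\beta_j^M=0$ on the null set, invoke a Berry--Esseen--type $O(n^{-1/2})$ normal approximation for the standardized statistic, and then sum the resulting tail bounds over $(\mathcal{M}_{\star\mathcal{D}})^c$ before plugging in $\delta=\Phi^{-1}(1-f/(2d))$. The paper's proof is terser---it simply cites the asymptotic normality literature for the $n^{-1/2}$ rate rather than sketching the score expansion and uniformity argument you outline---but the logical skeleton is identical.
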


\subsection{Random Decoupling}

Random decoupling is an another procedure to select the thresholding parameter $\gamma$. It is used to create a null model, in which the data is formed by randomly permuting the rows of the last $d$ columns of the design matrix, while keeping the first $q$ columns of the design matrix intact. It is easy to see that by regressing $Y$ on $\bX_{\mC j}^*$ where the rows of the design matrix corresponding to $X_j$ ($j \not \in \mC$) have been randomly permuted, the obtained marginal values of $\hat{\beta}^{M*}_{j}$ is a statistical estimate of zero. These marginal estimates based on decoupled data measure the noise level of the estimates under the null model.  Let $\hat{\gamma}^* = \max_{q+1 \leq j \leq p} |\hat{\beta}^{M*}_{j}|$.  If $\hat{\gamma}^*$ is used as the thresholding value, all variables will be screened out based on the permuted data, which leads to no false positives in this case.  In other words, it is the minimum thresholding parameter that makes no false positives.  However, this $\hat{\gamma}^*$ depends on the realization of the permutation.  To stabilize the thresholding value, one can repeat this exercise $K$ times (e.g. 5 or 10 times), resulting in the values
\begin{equation} \label{eq15}
 \{|\hat{\beta}^{M*}_{kj}|, j=q+1, \cdots, p\}_{k=1}^K,
\end{equation}
$\{\gamma_k^*\}_{k=1}^K$, where $\gamma_k^* = \max_{q+1 \leq j \leq p} |\hat{\beta}^{M*}_{kj}|$.

Now, one can choose the maximum  of $\{\gamma_k^*\}_{k=1}^K$, denoted by $\hat \gamma_{\max}^*$,  as a thresholding value.  A more stable choice is the $\tau$-quantile of the values in (\ref{eq15}), denoted it by $\gamma^*_{\tau}$.  A useful range for $\tau$ is $[.95,1]$.  Note that for $\tau=1$,  $\gamma^*_{1} =\hat{\gamma}_{\max}^*$.  The selected variables are then
\[
\hat{\mathcal{M}}_{\mD,\tau} = \{j: |\hat{\beta}_j^{M}| \geq \gamma^*_{\tau} \}.
\]
In our numerical implementations, we do coupling five times, i.e. $K=5$, and take $\tau = 0.99$.  A similar idea for unconditional SIS appears already in \cite{AddSIS} for additive models.

\section{NUMERICAL STUDIES}

In this section, we demonstrate the performance of CSIS on simulated data and two empirical datasets. We compare CSIS versus sure independence screening and penalized least squares methods in a variety of settings.

\subsection{Simulation Study}

In the simulation study, we compare the performance of the proposed CSIS with Lasso (\citealt{Lasso}) and unconditional SIS (\citealt{SISGLM}), in terms of variable screening. We vary the sample size from $100$ to $500$ for different scenarios and the number of predictors range from $p=2,000$ to $40,000$. We present results with both the linear regression and the logistic regression.

We evaluate different screening methods on $200$ simulated data sets based on the following criteria:
\begin{enumerate}
\item MMMS: median minimum model size of the selected models that are required to have a sure screening.  The sampling variability of minimum model size (MMS) is measured by the robust standard deviation (RSD), which is defined as the associated interquartile range of MMS divided by $1.34$ across 200 simulations.
\item FP: average number of false positives across the 200 simulations,
\item FN: average number of false negatives across 200 simulations.
\end{enumerate}
We consider two different methods for selecting thresholding parameters: controlling FDR and random decoupling as outlined in the previous section, and we present false negatives and false positives for each method. Number of average false positives and false negatives are denoted by $\mathrm{FP}_{\pi}$ and $\mathrm{FN}_{\pi}$ for the random decoupling method and
$\mathrm{FP}_{\FDR}$ and $\mathrm{FN}_{\FDR}$ for the FDR method. For the FDR method, we have chosen the number of tolerated false positives as $n/\log{n}$.
For the experiments with $p=5,000$ and $p=40,000$, we do not report the corresponding results for Lasso, since it is not proposed for variable screening, and the data-driven choice of regularization parameter for model selection is not necessarily optimal for variable screening.

\subsubsection{Normal model}

The first two simulated examples concern linear models introduced in the introduction, regarding the false positives and false negatives of unconditional SIS.  We report the simulation results in Table~\ref{tab1} in which the column labeled ``{\bf Example 1}'' refers to the first setting and column labeled ``{\bf Example 2}'' referred to the second setting.
These examples are designed to fail the unconditional SIS.   Not surprisingly, SIS performs poorly in sure screening the variables, and conditional SIS easily resolves the problem. Also, we note that CSIS needs only one additional variable to have sure screening, whereas Lasso needs 15 additional variables.  Both the FDR and the random decoupling methods return no false negatives under almost all of the simulations.  In other words, both of the data-driven thresholding methods ensured the sure screening property.  However, they tend to be conservative, as the numbers of the false positives are high. The FDR approach has a relatively small number of false positives when used for conditional sure independent screening. For these settings, FDR method was found to be less conservative than the random decoupling method.

\begin{center}
\begin{table}[ht]
\begin{centering}
\caption{The MMMS, its RSD (in parentheses), the ``false
negative'' and ``false positive'' for the linear model with $n=100$ and $p=2,000$.}
\label{tab1}
\par\end{centering}

\begin{centering}
\begin{tabular}{cccccc}
\hline
 & \multicolumn{5}{c}{\textbf{Example 1}}\\
\hline
 & SIS & MLR & CSIS & CMLR & Lasso \\
\hline
{MMMS} & 1995 (0) & 1995 (0) & 1 (0) & 1 (0) & 16 (0) \\
\hline
$\mathrm{FP}_{\pi}$, $\mathrm{FN}_{\pi}$ & 1531,  0.07 & 1859, 1.00 & 175, 0 & 112, 0 & -\\
\hline
$\mathrm{FP}_{\FDR}$, $\mathrm{FN}_{\FDR}$ & 1934, 0.07 & - & 164, 0 & - & - \\
\hline\hline
 & \multicolumn{5}{c}{\textbf{Example 2}}\\
\hline
 & SIS & MLR & CSIS & CMLR & Lasso \\
\hline
{MMMS} & 1999 (0) & 1999 (0) & 1 (0) & 1 (0) & 16 (0) \\
\hline
$\mathrm{FP}_{\pi}$, $\mathrm{FN}_{\pi}$ & 1998,  0.01 & 1998, 0.04 & 543.1, 0 & 174, 0 & -\\
\hline
$\mathrm{FP}_{\FDR}$, $\mathrm{FN}_{\FDR}$ & 1998, 0.01 & - & 15.66, 0 & - & - \\
\hline
\end{tabular}
\par\end{centering}

\end{table}

\par\end{center}

In the next two settings, we work with higher dimensions, $p=5,000$ and $p=40,000$.
Following \cite{SISGLM}, we generate the covariates from
\begin{equation}\label{cov generated set1}
X_j = \frac{\varepsilon_j+a_j \varepsilon}{\sqrt{1+a_j^2}},
\end{equation}
where $\varepsilon$ and $\{\varepsilon_j\}_{j=1}^{p/3}$ are i.i.d. standard normal random variables, $\{\varepsilon_j\}_{j=p/3+1}^{2p/3}$ are i.i.d. double exponential variables with location parameter zero and scale parameter one and $\{\varepsilon_j\}_{j=2p/3 +1 }^{p}$ are i.i.d. and follow a mixture normal distribution with two components $N(-1,1)$, $N(1,0.5)$ and equal mixture proportion. The covariates are standardized to have mean zero and variance one.
Specifically, we consider the following two settings.

{\bf Example 3}.  In this setting, $p=5,000$ and $s=12$. The constants $a_1,\ldots,a_{100}$ are the same and chosen such that the correlation $\rho=\Corr(X_i,X_j)=0, 0.2, 0.4, 0.6$ and $0.8$ among the first 100 variables and $a_{101}=\ldots=a_{5,000}=0$.

{\bf Example 4}.  In this setting, $p=40,000$ and $s=6$. The constants $a_1,\ldots,a_{50}$ are generated from the normal random distribution with mean $a$ and variance $1$ and $a_{51}=\ldots,a_{40,000}=0$. The constant $a$ is taken such that $\mathbb{E}(\Corr(X_i,X_j))=0, 0.2, 0.4, 0.6$ and $0.8$ among the first $r$ variables.

In both of the settings $\bbeta^\star$ is generated from an alternating sequence of $1$ and $1.3$. For conditional sure independence screening, we condition on the first 2 covariates if $s=6$ and we condition on the first 4 covariates if $s=12$.
Results are presented in Tables \ref{tab2} and \ref{tab3}.\\

\begin{table}
\caption{The MMMS, its RSD (in parentheses), the ``false
positive'' and ``false negative'' for Example 3 with $p=5,000$ and $s = 4 + 8$.}
\label{tab2}
\begin{center}
\begin{tabular}{ccccccc}
\hline
 \multicolumn{7}{c}{\textbf{Sure Independence Screening}}\\
\hline
$\rho$ & $n$ & $\mathrm{MMMS}$ & $\mathrm{FP}_{\pi}$ & $\mathrm{FN}_{\pi}$ & $\mathrm{FP}_{\FDR}$ & $\mathrm{FN}_{\FDR}$\\  \hline
0.00  & 300 &  86 (150)  &  0.21  &  4.61  &  20.75  &  1.23 \\
0.20  & 100 &  43 (19)  &  34.17  &  0.82  &  87.70 &  0.03 \\
0.40  & 100 &  56 (20)  &  87.38  &  0.00  &  101.75 &  0.00 \\
0.60  & 100 &  58 (24)  &  88.20  &  0.00  &  101.68  &  0.00 \\
0.80  & 100 &  63 (19)  &  88.17  &  0.00  &  101.64  &  0.00 \\
\hline
\multicolumn{7}{c}{\textbf{Conditional Sure Independence Screening}}\\
\hline
$\rho$ & $n$ & $\mathrm{MMMS}$ & $\mathrm{FP}_{\pi}$ & $\mathrm{FN}_{\pi}$ & $\mathrm{FP}_{\FDR}$ & $\mathrm{FN}_{\FDR}$\\
\hline
0.00  & 300 &  57 (92)  &  0.16  &  3.74  &  21.09  &  0.97 \\
0.20  & 100 &  31 (38)  &  2.74  &  2.97  &  29.93  &  0.69 \\
0.40  & 100 &  29 (21)  &  17.65  &  0.99  &  48.03  &  0.42 \\
0.60  & 100 &  32 (18)  &  44.93  &  0.23  &  55.60  &  0.29 \\
0.80  & 100 &  42 (20)  &  67.55 &  0.06  &  50.01 &  0.66 \\
\hline
\end{tabular}
\end{center}

\begin{center}
\begin{tabular}{ccccc}
\hline
\multicolumn{5}{c}{\textbf{Maximum Likelihood Ratio}}\\
\hline
$\rho$ & $n$ & $\mathrm{MMMS}$ & $\mathrm{FP}_{\pi}$ & $\mathrm{FN}_{\pi}$\\
\hline
0.00  & 300 &  86 (141)  &  0.77 & 0.23\\
0.20  & 100 &  43 (20)  & 47.88 & 0.03\\
0.40  & 100 &  52 (19) & 88.48 & 0.00 \\
0.60  & 100 &  58 (18) & 88.78 & 0.00\\
0.80  & 100 &  60 (19)  & 88.75 & 0.00 \\
\hline
\multicolumn{5}{c}{\textbf{Conditional Maximum Likelihood Ratio}}\\
\hline
$\rho$ & $n$ & $\mathrm{MMMS}$ & $\mathrm{FP}_{\pi}$ & $\mathrm{FN}_{\pi}$\\
\hline
0.00  & 300 &  18 (25)  & 0.72 & 1.65 \\
0.20  & 100 &  23 (24)  & 5.71 & 1.44 \\
0.40  & 100 &  23 (17)  & 16.45 & 0.76 \\
0.60  & 100 &  28 (19)  & 23.81 & 0.55 \\
0.80  & 100 &  33 (22)  & 26.09 & 0.69 \\
\hline
\end{tabular}
\end{center}
\end{table}

\begin{table}
\caption{The MMMS, its RSD (in parentheses), the ``false
positive'' and ``false negative'' for Example 4 with $p=40,000$
and $s = 2 + 4$.}
\label{tab3}
\begin{center}
\begin{tabular}{ccccccc}
\hline
 \multicolumn{7}{c}{\textbf{Sure Independence Screening}}\\
\hline
$\rho$ & $n$ & $\mathrm{MMMS}$ & $\mathrm{FP}_{\pi}$ & $\mathrm{FN}_{\pi}$ & $\mathrm{FP}_{\FDR}$ & $\mathrm{FN}_{\FDR}$\\  \hline
0.00  & 200 &  1133 (8246)  &  11.46  &  1.35  &  40.70  &  0.89 \\
0.20  & 200 &  37 (1079)  &  30.37  &  0.61  &  57.83  &  0.46 \\
0.40  & 200 &  37 (12)  &  37.92  &  0.32  &  62.71  &  0.24 \\
0.60  & 200 &  37 (11)  &  41.35  &  0.17  &  65.61  &  0.13 \\
0.80  & 200 &  36 (12)  &  43.73  &  0.02  &  66.89  &  0.02 \\
\hline
  \multicolumn{7}{c}{\textbf{Conditional Sure Independence Screening}}\\
\hline
$\rho$ & $n$ & $\mathrm{MMMS}$ & $\mathrm{FP}_{\pi}$ & $\mathrm{FN}_{\pi}$ & $\mathrm{FP}_{\FDR}$ & $\mathrm{FN}_{\FDR}$\\  \hline
0.00  & 200 &  13 (84)  &  5.83  &  0.57  &  31.04  &  0.43 \\
0.20  & 200 &  16 (18)  &  16.62  &  0.31  &  41.07  &  0.23 \\
0.40  & 200 &  16 (12)  &  23.89  &  0.11  &  45.61  &  0.08 \\
0.60  & 200 &  17 (10)  &  29.83  &  0.03  &  50.05  &  0.01 \\
0.80  & 200 &  17 (10)  &  37.41  &  0.00  &  54.34  &  0.02 \\
\hline
\end{tabular}
\end{center}

\begin{center}
\begin{tabular}{ccccc}
\hline
\multicolumn{5}{c}{\textbf{Maximum Likelihood Ratio}}\\
\hline
$\rho$ & $n$ & $\mathrm{MMMS}$ & $\mathrm{FP}_{\pi}$ & $\mathrm{FN}_{\pi}$\\
\hline
0.00  & 200 &  1133 (8246) & 13.61 &  0.19 \\
0.20  & 200 &  41 (1503)  & 31.62 &  0.11  \\
0.40  & 200 &  37 (12)  & 39.24  &   0.06 \\
0.60  & 200 &  37 (11)  & 42.51 & 0.05  \\
0.80  & 200 &  36 (12)  & 44.45 & 0.00 \\
\hline
\multicolumn{5}{c}{\textbf{Conditional Maximum Likelihood Ratio}}\\
\hline
$\rho$ & $n$ & $\mathrm{MMMS}$ & $\mathrm{FP}_{\pi}$ & $\mathrm{FN}_{\pi}$\\
\hline
0.00  & 200 &  14 (261)  & 5.42 & 0.07  \\
0.20  & 200 &  10 (21)  & 13.02 & 0.05 \\
0.40  & 200 &  7 (10)  & 18.04 & 0.02 \\
0.60  & 200 &  6 (5)  & 21.66 & 0.01 \\
0.80  & 200 & 6 (3)  & 25.00 & 0.00 \\
\hline
\end{tabular}
\end{center}
\end{table}

As expected, CSIS needs a smaller model size to have all the relevant variables, i.e. to possess the sure screening property. The effect is more pronounced for higher $p$ and when more of the variables are correlated. A surprising result is that the advantage of conditioning is less when the correlation levels are higher. This is probably because of the fact that only 50 or 100 of the covariates are correlated, hence conditioning cannot fully utilize its advantages. We also see that, both methods for choosing the thresholding parameter are very effective. Both the FDR and empirical decoupling methods tend to have the sure screening property (no false negatives) and low number of false positives.

\subsubsection{Binomial model}

In this section data are given by i.i.d. copies of $(\bX^T,Y)$, where the conditional distribution of $Y$ given $\bX=\bx$ is a binomial distribution with probability of success $\mathbb{P}(\bx) = \exp\left( \bx^T \bbeta^\star\right) \left(1+ \exp\left(\bx^T \bbeta^\star \right) \right)^{-1}.$ The first two settings use the same setup of covariates  and the same values for $\bbeta^\ast$ as that in Example 1. The results are given in Table \ref{tab bin1}.

The results are almost the same as in the normal model. Conditional screening always lists the active variable as the most important one and Lasso only needs 16 variables. We also see that FDR and random decoupling methods are still successful, even though the setting is nonlinear.

The final settings for the binomial model use the same construction for the covariates as those in Examples 3 and 4. We again work with $s=6$ and $s=12$. For settings 2 and 3, $\bbeta^\star$ is again given by a sequence of $1$s and $1.3$s. Results are given in Tables \ref{tab bin2} and \ref{tab bin3}.

The results are the same as for the normal model. Due to the nonlinear nature of the problem, the minimum model size is slightly higher and the thresholding methods are less efficient. However, even though the covariates are not too correlated, overall advantage of conditional sure independence screening can easily be observed.

\begin{center}
\begin{table}[h]
\begin{centering}
\caption{The MMMS, its RSD (in parentheses) for the binomial model with the
{}``false negative'' and {}``false positive'' settings for $n=100$
and $p=2,000$.}
\label{tab bin1}
\par\end{centering}

\begin{centering}
\begin{tabular}{cccccc}
\hline
 & \multicolumn{5}{c}{\textbf{Example 1}}\\
\hline
 & SIS & MLR & CSIS & CMLR & Lasso \\
\hline
{MMMS} & 1995 (1.5) & 1995 (1.5) & 1 (0) & 1 (0) & 16 (0) \\
\hline
$\mathrm{FP}_{\pi}$, $\mathrm{FN}_{\pi}$ & 726,  0.07 & 1282, 1.00 & 35.72, 0 & 31.11, 0.01& -\\
\hline
$\mathrm{FP}_{\FDR}$, $\mathrm{FN}_{\FDR}$ & 1344, 0.07 & - & 34.05, 0 & - & - \\
\hline\hline
 & \multicolumn{5}{c}{\textbf{Example 2}}\\
\hline
 & SIS & MLR & CSIS & CMLR & Lasso \\
\hline
{MMMS} & 1999 (0) & 1999 (0) & 1 (0) & 1(0) & 16 (0) \\
\hline
$\mathrm{FP}_{\pi}$, $\mathrm{FN}_{\pi}$ & 1998,  0.03 &  1998, 0.14 & 462, 0 & 157, 0.01 & -\\
\hline
$\mathrm{FP}_{\FDR}$, $\mathrm{FN}_{\FDR}$ & 1998, 0.04 & - & 5.65, 0 & - & - \\
\hline
\end{tabular}
\par\end{centering}
\end{table}
\par\end{center}

\begin{table}
\caption{The MMMS, its RSD (in parentheses), the ``false
positive'' and ``false negative'' for Example 3 with the binomial model with $p=5,000$
and $s = 4 + 8$.}
\label{tab bin2}
\begin{center}
\begin{tabular}{ccccccc}
\hline
\multicolumn{7}{c}{\textbf{Sure Independence Screening}}\\
\hline
$\rho$ & $n$ & $\mathrm{MMMS}$ & $\mathrm{FP}_{\pi}$ & $\mathrm{FN}_{\pi}$ & $\mathrm{FP}_{\FDR}$ & $\mathrm{FN}_{\FDR}$\\
\hline
0.00  & 300 &  215 (312)  &  0.19  &  5.78  &  23.06  &  1.77 \\
0.20  & 300 &  27 (14)  &  73.22 &  0.02  &  109.56  &  0.00 \\
0.40  & 300 &  49 (21)  &  88.19  &  0.00  &  110.15  &  0.00 \\
0.60  & 300 &  56 (20)  &  88.17  &  0.00  &  110.00 &  0.00 \\
0.80  & 300 &  68 (19)  &  88.20  &  0.00  &  110.34  &  0.00 \\
\hline
\multicolumn{7}{c}{\textbf{Conditional Sure Independence Screening}}\\
\hline
$\rho$ & $n$ & $\mathrm{MMMS}$ & $\mathrm{FP}_{\pi}$ & $\mathrm{FN}_{\pi}$ & $\mathrm{FP}_{\FDR}$ & $\mathrm{FN}_{\FDR}$\\
\hline
0.00  & 300 &  87 (173)  &  20.15 & 1.24 &  24.03 &  1.11 \\
0.20  & 300 &  19 (13)  &  49.25 &  0.14 &  53.87 &  0.11 \\
0.40  & 300 &  34 (23)  &  67.82  &  0.17 &  61.72  &  0.31 \\
0.60  & 300 &  43 (24)  &  77.36  &  0.21  &  53.83  &  1.01 \\
0.80  & 300 &  66 (55)  &  78.33  &  0.51 &  36.16  &  3.42 \\
\hline
\end{tabular}
\end{center}
\begin{center}
\begin{tabular}{ccccc}
\hline
\multicolumn{5}{c}{\textbf{Maximum Likelihood Ratio}}\\
\hline
$\rho$ & $n$ & $\mathrm{MMMS}$ & $\mathrm{FP}_{\pi}$ & $\mathrm{FN}_{\pi}$\\
\hline
0.00  & 300 &  210 (312)  & 20.18 &  0.08\\
0.20  & 300 &  28 (17)  & 107.08 & 0.00 \\
0.40  & 300 &  47 (24)  & 107.82 &  0.00 \\
0.60  & 300 &  60 (22)  & 107.47 & 0.00 \\
0.80  & 300 &  67 (19)  & 107.30 & 0.00 \\
\hline
\multicolumn{5}{c}{\textbf{Conditional Maximum Likelihood Ratio}}\\
\hline
$\rho$ & $n$ & $\mathrm{MMMS}$ & $\mathrm{FP}_{\pi}$ & $\mathrm{FN}_{\pi}$\\
\hline
0.00  & 300 &  83 (173)  & 20.18 & 1.21 \\
0.20  & 300 &  20 (14)  & 45.27 & 0.20 \\
0.40  & 300 & 39 (30)  & 53.48 & 0.49 \\
0.60  & 300 &  71 (87)  & 49.47 & 1.15 \\
0.80  & 300 &  402 (561) & 35.42 & 3.43 \\
\hline
\end{tabular}
\end{center}
\end{table}

\begin{table}
\caption{The MMMS, its RSD (in parentheses), the ``false
positive'' and ``false negative'' for Example 4 with the binomial model with $p=40,000$
and $s = 2 + 4$.}
\label{tab bin3}
\begin{center}
\begin{tabular}{ccccccc}
\hline
\multicolumn{7}{c}{\textbf{Sure Independence Screening}}\\
\hline
$\rho$ & $n$ & $\mathrm{MMMS}$ & $\mathrm{FP}_{\pi}$ & $\mathrm{FN}_{\pi}$ & $\mathrm{FP}_{\FDR}$ & $\mathrm{FN}_{\FDR}$\\
\hline
0.00  & 500 &  318 (7038)  &  12.04  &  1.22  &  51.32  &  0.79 \\
0.20  & 500 &  38 (428)  &  32.47  &  0.57  &  68.46  &  0.38 \\
0.40  & 500 &  38 (12)  &  38.66  &  0.27  &  73.42  &  0.19 \\
0.60  & 500 &  38 (12)  &  41.99  &  0.16  &  76.11  &  0.10 \\
0.80  & 500 &  35 (12)  &  43.84  &  0.03  &  77.38  &  0.02 \\
\hline
\multicolumn{7}{c}{\textbf{Conditional Sure Independence Screening}}\\
\hline
$\rho$ & $n$ & $\mathrm{MMMS}$ & $\mathrm{FP}_{\pi}$ & $\mathrm{FN}_{\pi}$ & $\mathrm{FP}_{\FDR}$ & $\mathrm{FN}_{\FDR}$\\
\hline
0.00  & 500 &  13 (354)  &  5.96  &  0.66  &  42.51  &  0.49 \\
0.20  & 500 &  15 (16)  &  14.51  &  0.39  &  49.79  &  0.27 \\
0.40  & 500 &  16 (13)  &  19.11  &  0.24  &  51.68  &  0.22 \\
0.60  & 500 &  19 (10)  &  22.80  &  0.21  &  51.78  &  0.24 \\
0.80  & 500 &  19 (10)  &  26.39  &  0.14  &  46.49  &  0.64 \\
\hline
\end{tabular}
\end{center}
\begin{center}
\begin{tabular}{ccccc}
\hline
\multicolumn{5}{c}{\textbf{Maximum Likelihood Ratio}}\\
\hline
$\rho$ & $n$ & $\mathrm{MMMS}$ & $\mathrm{FP}_{\pi}$ & $\mathrm{FN}_{\pi}$\\
\hline
0.00  & 500 &  309 (7030)  & 14.06 & 0.22 \\
0.20  & 500 &  37 (255)  & 34.10 & 0.09  \\
0.40  & 500 &  35.5 (11)  & 40.50 & 0.05 \\
0.60  & 500 &  35.5 (12)  & 42.89 &  0.03 \\
0.80  & 500 &  33.5 (14)  & 44.39 & 0.00 \\
\hline
\multicolumn{5}{c}{\textbf{Conditional Maximum Likelihood Ratio}}\\
\hline
$\rho$ & $n$ & $\mathrm{MMMS}$ & $\mathrm{FP}_{\pi}$ & $\mathrm{FN}_{\pi}$\\
\hline
0.00  & 500 & 25 (892)  & 5.96 & 0.14 \\
0.20  & 500 & 13 (62)  & 12.38 & 0.09 \\
0.40  & 500 &  13 (22)  & 14.17 &  0.08 \\
0.60  & 500 & 15.5 (17)  & 13.75 & 0.11 \\
0.80  & 500 &  22 (72)  & 9.30 & 0.28 \\
\hline
\end{tabular}
\end{center}
\vspace{2 mm}
\end{table}


\subsubsection{Robustness of CSIS}

In this section, we evaluate the performance of CSIS under three different conditioning sets: The set consists of (i) only active variables, (ii) both active  and inactive variables and (iii) only (randomly chosen) inactive variables. We consider a different correlation structure where the number of correlated variables is significantly large. 

For this experiment, Example 5, we set $p=10,1000$ and $s=6$. We generate covariates from equation \eqref{cov generated set1} and choose the constants $a_1,\dots,a_{2000}$ such that the correlation $\rho=\Corr(X_i,X_j)=0, 0.2, 0.4, 0.6$ and $0.8$ among the first 2000 variables and $a_{2001}=\ldots=a_{10,000}=0$. We fix $\beta^{\star}=\{1,2,1,2,0,\dots,0,1,2\}^T$.

The following three conditioning sets are considered (i) $\mC_{ 1}=\{1,2\}$; (ii) $\mC_{2}=\{1,2,5,2001\}$ and (iii) $\mC_{3} = $\{random choice of 4 inactive variables\}.  More precisely, $\mC_{3}$ consists of 3 randomly chosen variables from the first two thousand variables which are correlated and 1 randomly chosen inactive variable from the rest.  Note that variables 1 and 2 are active variables whereas variables 5 and 2001 are inactive. We have simulation results using both the conditional MLE (\ref{eq5}) and conditional MLR (\ref{eq6}).  To save the space, we only present the results using the conditional MLE for the normal model in Table \ref{tabI1} and for the binomial model in Table \ref{tabI3}.

The results show clearly that the benefits of conditional screening are significant even when variables are wrongly chosen. CSIS reduces the minimum model size at least by half, and for most of the cases it uses 10 times as less variables as the unconditioning one. CSIS performs well even if some of the conditioned variables are inactive or even all are randomly selected inactive variables.   For the worst cases, ``mis-conditioning" forced CSIS to recruit twice as many variables, and for most of the cases, the difference is not excessive.
In all cases, CSIS performs significantly better than the unconditioning case.


\begin{table}
\caption{The MMMS, its RSD (in parentheses), the ``false
positive'' and ``false negative'' for Example 5 for the Linear Model with $p=10,000$ and $s = 2 + 4$.}
\label{tabI1}
\begin{center}
\begin{tabular}{ccccccc}
\hline
 \multicolumn{7}{c}{\textbf{Sure Independence Screening}}\\
\hline
$\rho$ & $n$ & $\mathrm{MMMS}$ & $\mathrm{FP}_{\pi}$ & $\mathrm{FN}_{\pi}$ & $\mathrm{FP}_{\FDR}$ & $\mathrm{FN}_{\FDR}$\\  \hline
0.00 & 200 & 35 (80) & 98.20 & 0.28 & 20.16 & 0.63 \\
0.20 & 200 & 1601 (812) & 1854.75 & 0.34 & 1537.35 & 0.51 \\
0.40 & 200 & 2038 (267) & 2083.30 & 0.45 & 2010.73 & 0.63 \\
0.60 & 200 & 2108 (470) & 2088.11 & 0.52 & 2010.59 & 0.73 \\
0.80 & 200 & 2193 (663) & 2092.08 & 0.58 & 2010.59 & 0.83 \\
\hline
\multicolumn{7}{c}{\textbf{CSIS with $\mC_1$}}\\
\hline
$\rho$ & $n$ & $\mathrm{MMMS}$ & $\mathrm{FP}_{\pi}$ & $\mathrm{FN}_{\pi}$ & $\mathrm{FP}_{\FDR}$ & $\mathrm{FN}_{\FDR}$\\
\hline
0.00 & 200 & 6 (8) & 98.17 & 0.07 & 23.51 & 4.00 \\
0.20 & 200 & 13 (47) & 440.33 & 0.04 & 143.85 & 3.90 \\
0.40 & 200 & 75 (215) & 1001.84 & 0.03 & 336.05 & 3.67 \\
0.60 & 200 & 216 (358) & 1372.48 & 0.01 & 379.81 & 3.64 \\
0.80 & 200 & 423 (429) & 1518.04 & 0.00 & 234.19 & 3.79 \\
\hline
\multicolumn{7}{c}{\textbf{CSIS with $\mC_2$}}\\
\hline
$\rho$ & $n$ & $\mathrm{MMMS}$ & $\mathrm{FP}_{\pi}$ & $\mathrm{FN}_{\pi}$ & $\mathrm{FP}_{\FDR}$ & $\mathrm{FN}_{\FDR}$\\
\hline
0.00 & 200 & 6 (7) & 98.29 & 0.08 & 23.44 & 4.00 \\
0.20 & 200 & 21 (75) & 565.76 & 0.03 & 212.80 & 3.75 \\
0.40 & 200 & 152 (413) & 1367.95 & 0.03 & 642.06 & 3.33 \\
0.60 & 200 & 443 (676) & 1766.88 & 0.01 & 830.50 & 3.12 \\
0.80 & 200 & 868 (643) & 1860.01 & 0.00 & 594.86 & 3.40 \\
\hline
\multicolumn{7}{c}{\textbf{CSIS with $\mC_3$}}\\
\hline
$\rho$ & $n$ & $\mathrm{MMMS}$ & $\mathrm{FP}_{\pi}$ & $\mathrm{FN}_{\pi}$ & $\mathrm{FP}_{\FDR}$ & $\mathrm{FN}_{\FDR}$\\
\hline
0.00 & 200 & 44 (90) & 100.33 & 0.30 & 23.23 & 2.31 \\
0.20 & 200 & 481 (687) & 1022.85 & 0.24 & 499.31 & 1.50 \\
0.40 & 200 & 1322 (752) & 1806.40 & 0.20 & 1147.03 & 0.86 \\
0.60 & 200 & 1652 (462) & 2003.43 & 0.10 & 1345.32 & 0.63 \\
0.80 & 200 & 1716 (297) & 2037.08 & 0.03 & 1103.83 & 0.94 \\
\hline

\end{tabular}
\end{center}
\end{table}

\begin{table}
\caption{The MMMS, its RSD (in parentheses), the ``false
positive'' and ``false negative'' for Example 5 for the Binomial Model with $p=10,000$ and $s = 2 + 4$.}
\label{tabI3}
\begin{center}
\begin{tabular}{ccccccc}
\hline
 \multicolumn{7}{c}{\textbf{Sure Independence Screening}}\\
\hline
$\rho$ & $n$ & $\mathrm{MMMS}$ & $\mathrm{FP}_{\pi}$ & $\mathrm{FN}_{\pi}$ & $\mathrm{FP}_{\FDR}$ & $\mathrm{FN}_{\FDR}$\\  \hline
0.00 & 400 & 24 (59) & 97.39 & 0.21 & 27.29 & 0.48 \\
0.20 & 400 & 1606 (776) & 1933.60 & 0.20 & 1725.60 & 0.39 \\
0.40 & 400 & 2029 (101) & 2082.82 & 0.30 & 2016.35 & 0.52 \\
0.60 & 400 & 2070 (258) & 2087.22 & 0.45 & 2015.59 & 0.64 \\
0.80 & 400 & 2096 (429) & 2090.86 & 0.51 & 2015.07 & 0.66 \\
\hline
\multicolumn{7}{c}{\textbf{CSIS with $\mC_1$}}\\
\hline
$\rho$ & $n$ & $\mathrm{MMMS}$ & $\mathrm{FP}_{\pi}$ & $\mathrm{FN}_{\pi}$ & $\mathrm{FP}_{\FDR}$ & $\mathrm{FN}_{\FDR}$\\
\hline
0.00 & 400 & 8 (16) & 98.20 & 0.10 & 31.98 & 4.00 \\
0.20 & 400 & 22 (75) & 361.04 & 0.10 & 138.73 & 3.85 \\
0.40 & 400 & 107 (223) & 743.80 & 0.08 & 247.20 & 3.74 \\
0.60 & 400 & 289 (439) & 1022.71 & 0.10 & 246.67 & 3.75 \\
0.80 & 400 & 637 (528) & 1142.79 & 0.16 & 133.97 & 3.82 \\
\hline
\multicolumn{7}{c}{\textbf{CSIS with $\mC_2$}}\\
\hline
$\rho$ & $n$ & $\mathrm{MMMS}$ & $\mathrm{FP}_{\pi}$ & $\mathrm{FN}_{\pi}$ & $\mathrm{FP}_{\FDR}$ & $\mathrm{FN}_{\FDR}$\\
\hline
0.00 & 400 & 7 (17) & 98.33 & 0.11 & 31.31 & 4.00 \\
0.20 & 400 & 27 (114) & 460.60 & 0.11 & 196.27 & 3.83 \\
0.40 & 400 & 176 (429) & 1045.28 & 0.08 & 456.86 & 3.52 \\
0.60 & 400 & 578 (759) & 1394.61 & 0.10 & 508.52 & 3.55 \\
0.80 & 400 & 910 (673) & 1480.91 & 0.10 & 291.69 & 3.71 \\
\hline
\multicolumn{7}{c}{\textbf{CSIS with $\mC_3$}}\\
\hline
$\rho$ & $n$ & $\mathrm{MMMS}$ & $\mathrm{FP}_{\pi}$ & $\mathrm{FN}_{\pi}$ & $\mathrm{FP}_{\FDR}$ & $\mathrm{FN}_{\FDR}$\\
\hline
0.00 & 400 & 309 (919) & 100.00 & 0.89 & 14.83 & 2.69 \\
0.20 & 400 & 777 (1129) & 529.20 & 0.66 & 149.64 & 2.12 \\
0.40 & 400 & 1285 (1075) & 1087.79 & 0.56 & 333.27 & 1.96 \\
0.60 & 400 & 1572 (977) & 1383.80 & 0.58 & 336.54 & 2.06 \\
0.80 & 400 & 1629 (892) & 1485.02 & 0.57 & 178.37 & 2.79 \\
\hline
\end{tabular}
\end{center}
\end{table}

\subsection{Leukemia Data}

In this section, we demonstrate how CSIS can be used to do variable selection with an empirical dataset. We consider the leukemia dataset which was first studied by \cite{Golub} and is available at http://www.broad.mit.edu/cgi-bin/cancer/datasets.cgi. The data come from a study of gene expression in two types of acute leukemias, acute lymphoblastic leukemia (ALL) and acute myeloid leukemia (AML). Gene expression levels were measured using Affymetrix oligonucleotide arrays containing 7129 genes and 72 samples coming from two classes, namely 47 in class ALL  and 25 in class AML. Among these 72 samples, 38 (27  ALL and 11  AML) are set to be training samples and 34 (20  ALL and 14 AML) are set as test samples. For this dataset we want to select the relevant genes, and based on the selected genes estimate whether the patient has ALL or AML. AML progresses very fast and has a poor prognosis. Therefore, a consistent classification method that relies on gene expression levels would be very beneficial for the diagnosis.

In order to choose the conditioning genes, we take a pair of genes described in \cite{Golub} that result in low test errors. First is Zyxin and the second one is Transcriptional activator hSNF2b. Both genes have empirically high correlations for the difference between people with AML and ALL.

After conditioning on the aforementioned genes, we implement our conditional selection procedure using logistic regression. Using the random decoupling method, we select a single gene, TCRD (T-cell receptor delta locus). Although this gene has not been discovered by the ALL/AML studies so far, it is known to have a relation with T-Cell ALL, a subgroup of ALL (\citealt{TCell}). By using only these three genes, we are able to obtain a training error of 0 out of 38, and a test error of 1 out of 34. Similar studies in the past using sparse linear discriminant analysis or nearest shrunken centroids methods have obtained test errors of 1 by using more than 10 variables. We conjecture that this is due to the high correlation between the Zyxin gene and others, and that this correlation masks the information contained in the TCRD gene.

\subsection{Financial Data}
In this section we illustrate the advantages of conditional sure independence screening on a factor model with financial data. From the website
{http://mba.tuck.dartmouth.edu/pages} {/faculty/ken.french/}
we obtain 30 portfolios formed with respect to their industries. The returns for each portfolio are denoted by $y^j$ (for $j=1,\dots\,30$). The Fama-French three-factor model suggests that these returns follow the following equation
\begin{equation}
y^j_i=b_1^j f^1_i +b_2^j f^2_i + b_3^j f^3_i + \varepsilon_i,
\end{equation}
where $f^1$ is the excess return of the proxy market portfolio (given by the difference of the one-month T-Bill yield and the  value weighted return of all stocks on NYSE, AMEX and NASDAQ), $f^2$ is the difference between the return of small and big companies (measured by the difference of returns of two portfolios, one with companies that have small market cap and one with companies with large market cap) and finally $f^3$ is the difference of return from value companies and growth companies. This model was first proposed by \cite{FF93} and has been extensively analyzed since then. Since this seminal work, many other factors have been considered. In our numerical example, we used screening with the permutation test to detect if other factors are necessary. Besides the three factors mentioned above, we consider the momentum factor as an additional factor. This gives us 4 factors that are conditioned upon in CSIS.  For each given industrial portfolio, we also consider the returns from the other 29 portfolios as potential prediction factors.

We use daily returns data from 1/3/2002 to 12/31/2007. For each portfolio (30 in total), we first consider the marginal screening without conditioning. On average, for each portfolio, marginal screening picks 25.3 among 29 other industrial portfolios as predictors. This is mainly due to correlations between the returns of different portfolios. We next consider conditional marginal screening, in which the three Fama-French factors and the momentum factor are conditioned upon. As expected, the number of the selected variables decreases significantly to an average of 4.8. That is, about 4.8 portfolios on average can still have some potential prediction power in presence of the aforementioned four major factors.  The marginal and conditional fits of the values are given in Figure \ref{fig factors}. The black parts indicate the variables which are not included.

It is seen from these results that, conditional screening is more advantageous compared to marginal screening if few of the factors are known to be important. Furthermore, when there is significant correlation between some of the factors, as shown in the introduction, marginal screening considers most of the factors as relevant. In almost all financial models, stock returns are correlated with the return of the market portfolio. Therefore, in variable selection for financial factor models with many variables, one should always consider the returns conditional on the main driving forces of the market.

\begin{figure}[ht]
\centering
\subfigure[$\left| \hat{\beta}^M \right|$ using marginal screening]{
\includegraphics[width=0.45\textwidth]{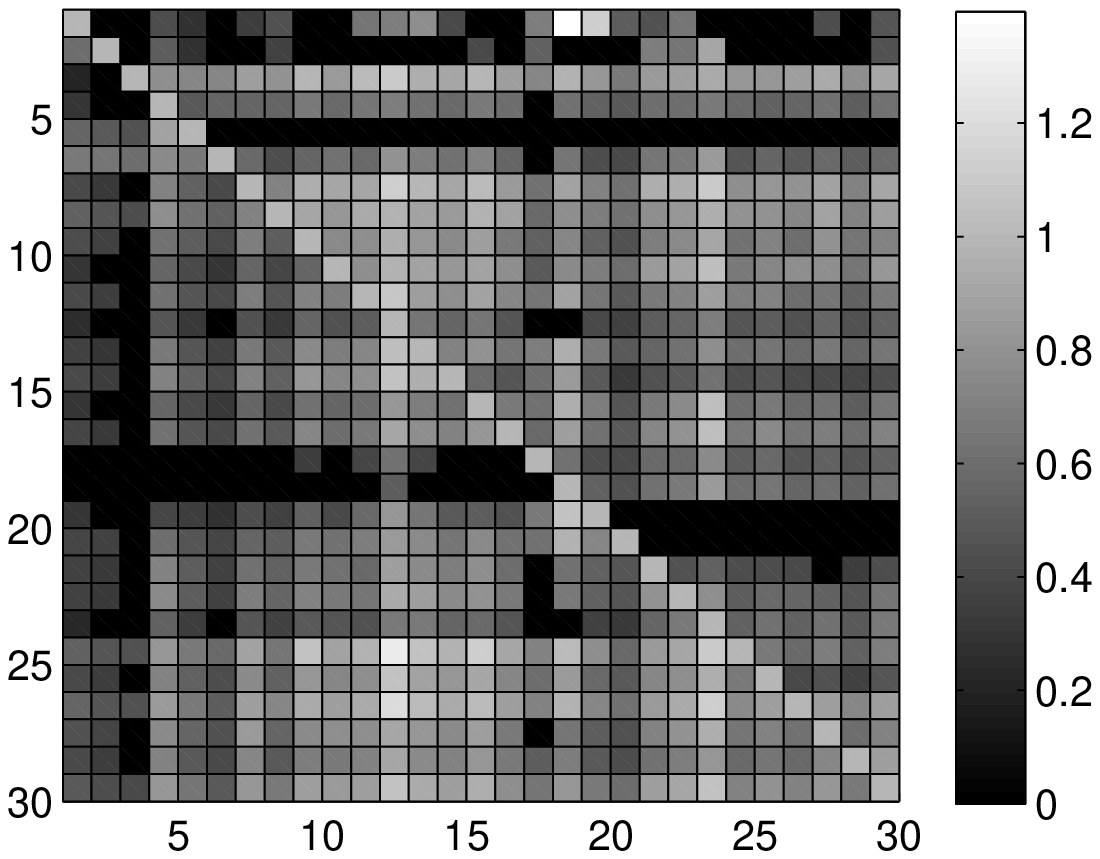}
}
\subfigure[$\left| \hat{\beta}^M \right|$ using conditional screening]{
\includegraphics[width=0.45\textwidth]{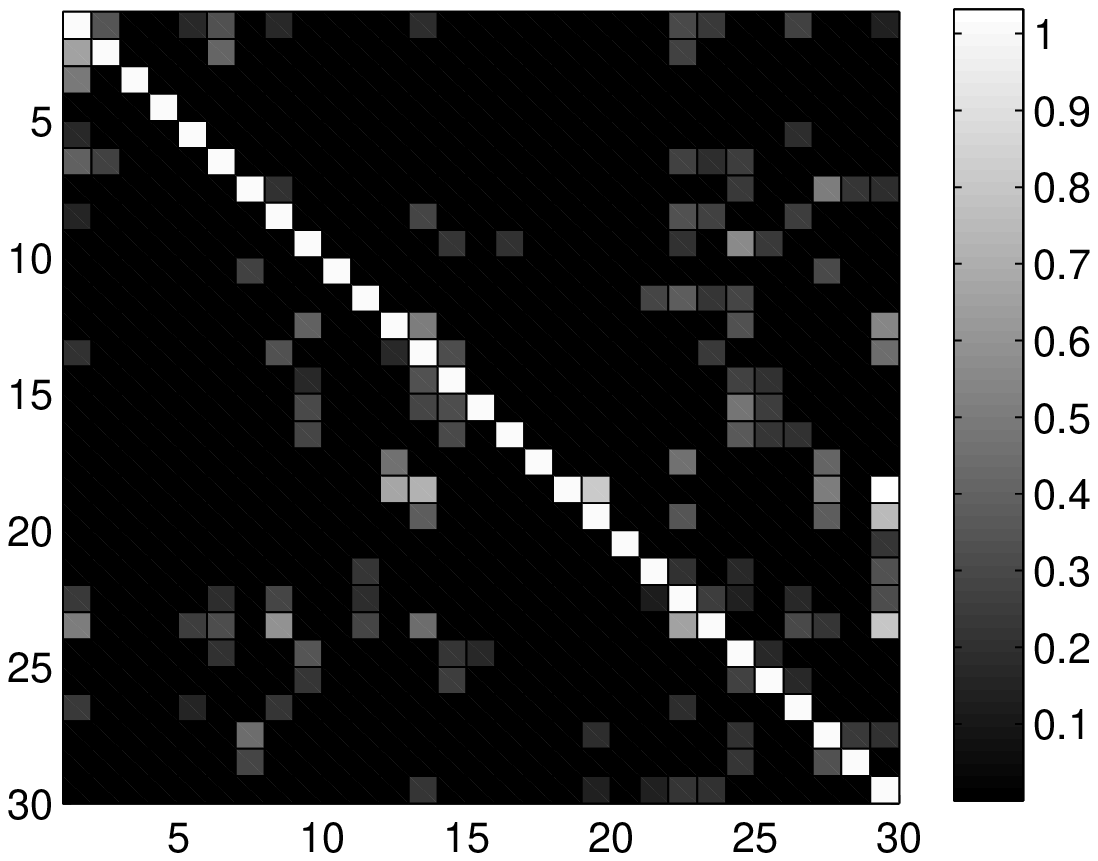}
}
\caption{Chosen factors with marginal (left) and conditional screening (right).}
\label{fig factors}
\end{figure}

\setcounter{figure}{0}
\section*{APPENDIX}

\renewcommand{\theequation}{A.\arabic{equation}}
\setcounter{equation}{0}

\renewcommand{\thesubsection}{A.\arabic{subsection}}
\setcounter{equation}{0} \setcounter{subsection}{0}

\subsection{Proof of Theorem \ref{thm1}}\label{app thm1}

\begin{proof}[Proof of Theorem \ref{thm1}]  The necessary part has already been proven in Section 3.1.  To prove the sufficient condition, we first note that condition
 $\Cov_L\left(Y,X_{j}\big|\mathbf{X_{\mathcal{C}}}\right)=0$ is equivalent to
\[
 \E b'(\bX_{\mC}^T \bbeta_{\mC}^M ) X_{j} = \E Y X_{j},
\]
as shown in Section 3.1.  This and (\ref{eq11})
imply that $( (\bbeta_{\mC}^M)^T, 0)^T$ is a solution to equation (\ref{eq9}).    By the uniqueness, it follows that $\bbeta_{\mC j}^M = ( (\bbeta_{\mC}^M)^T, 0)^T$, namely $\beta_j^M = 0$. This completes the proof.

\end{proof}

\subsection{Proof of Theorem \ref{thm2}}\label{app thm2}

\begin{proof}[Proof of Theorem \ref{thm2}]

We denote the matrix $\E m_j \bX_{\mC j}\bX_{\mC j}^T $ as $\Omega_j$ and partition it as
\[
\Omega_j=\left[
\begin{array}{cc}
\E m_j\bX_{\mC} \bX_\mC^T & \E m_j\bX_\mC \bX_j \\
\E m_j\bX_j \bX_\mC^T & \E m_j\bX_j^2 \end{array}
\right]=
\left[
\begin{array}{ccc}
\Omega_{\mC,\mC} & \Omega_{\mC,j}\\
\Omega_{\mC,j}^T & \Omega_{j,j}
\end{array}
\right].
\]
From the score equations, i.e. equations (\ref{eq9}) and  (\ref{eq11}), we have that
\begin{align*}
\E b' \lp \bX_{\mC}^T \bbeta_{\mC}^M \rp \bX_{\mC} = \E b' \lp \bX_{\mC j}^T \bbeta_{\mC j}^M \rp \bX_{\mC}.
\end{align*}
Using the definition of $m_j$, the above equation can be written as
\begin{align*}
\E m_j  (\bX_{\mC j}^T \bbeta_{\mC j}^M - \bX_{\mC}^T \bbeta_{\mC}^M) \bX_{\mC} = 0 .
\end{align*}
By letting $\bbeta_{\Delta,j}=\bbeta_{\mC j1}^M-\bbeta_{\mC}^M$, we have that
\[
    \E m_j  (\bX_{\mC}^T \bbeta_{\Delta, j}^M + X_{j}^T \bbeta_{j}^M) \bX_{\mC} = 0 .
\]
or equivalently
\begin{align} \label{eqA2}
\bbeta_{\Delta,j}=& - \Omega_{\mC,\mC}^{-1} \Omega_{\mC,j} \beta_j^M.
\end{align}
Furthermore, by (\ref{eq13}), we can express $\cov_L(Y,X_j|\bX_\mC)$ as
\begin{equation} \label{eq:prT2}
\cov_L(Y,X_j|\bX_\mC) = \E X_j \{ Y - \E_L (Y| \bX_{\mC}^T) \}.
\end{equation}
It follows from (\ref{eq12}) that
\begin{align} \label{eqA4}
\cov_L(Y,X_j|\bX_\mC)
 = \E X_j \left \{  b' \lp \bX_{\mC j}^T \bbeta_{\mC j}^M \rp - b' \lp \bX_{\mC}^T \bbeta_{\mC}^M \rp \right \}.
\end{align}
Using the definition of $m_j$ again, we have
\begin{align*}
\cov_L(Y,X_j|\bX_\mC) & = \E m_j X_j (\bX_{\mC j}^T \bbeta_{\mC j}^M - \bX_{\mC}^T \bbeta_{\mC}^M) \\
& = \E m_j X_j (\bX_{\mC}^T \bbeta_{\Delta, j}^M + X_{j}^T \bbeta_{j}^M)\\
&= \Omega_{\mC,j}^T  \bbeta_{\Delta,j} + \Omega_{j,j} \beta_j^M.
\end{align*}
By (\ref{eqA2}), we conclude that
\begin{equation} \label{eqA5}
 \cov_L(Y,X_j|\bX_\mC) = (\Omega_{j,j}-\Omega_{\mC,j}^T\Omega_{\mC,\mC}^{-1}\Omega_{\mC,j}) \beta_{j}^M.
\end{equation}

Now it is easy to see by Condition ~\ref{cond1} that
\[
|\beta_j^M| \geq c_2^{-1} |\cov_L(Y,X_j|\bX_\mC)| \geq c_3 n^{-\kappa},
\]
where $c_3 = c_1/c_2$.
Taking the minimum over all $j\in\mathcal{M}_{\mathcal{D}_\star}$ gives the result.
\end{proof}
\subsection{Proof of Theorem \ref{thm3}}\label{app thm3}

The proof of Theorem \ref{thm3} uses an exponential bound for a quasi maximum likelihood estimator. This bound is shown in \cite{SISGLM} and we repeat their theorem here to facilitate the reading.

Let $\bbeta_0=\arg \min_{\scriptsize \bbeta} \mathbb{E}l( \bX^T \bbeta,Y)$ the population parameter, which is an interior point of a large compact and convex set $\mathbf{B}\subset \R^p$.

\begin{condition}\label{ass Fisher int}
\qquad \vspace{0mm}
\begin{enumerate}
\item The Fisher information
\[
I\left(\bbeta\right)=\mathbb{E}\left\{ \left[\frac{\partial}{\partial\bbeta}l\left(\bX^{T}\bbeta,Y\right)\right]\left[\frac{\partial}{\partial\bbeta}l\left(\bX^{T}\bbeta,Y\right)\right]^{T}\right\} ,
\]
is finite and positive definite at $\bbeta=\bbeta_{0}$. Furthermore,
$\sup_{{\footnotesize \bbeta}\in\mathbf{B},\mathbf{x}}{\left\Vert I\left({ \bbeta}\right)^{1/2}\mathbf{x}\right\Vert }/{\left\Vert \mathbf{x}\right\Vert }$
exists.

\item The function $l(\bx^T\bbeta,y)$ is Lipschitz
with a positive constant $k_{n}$ for any $\bbeta$ in $\mathbf{B}$,
and $(\mathbf{x},y)$ in $\Lambda_n=\left\{ \mathbf{x},y:\left\Vert \mathbf{x}\right\Vert _{\infty}\leq K_{n}, |y|\leq K_{n}^{\star}\right\} $
with $K_{n}$ and $K_{n}^{\star}$ arbitrarily large constants.
Furthermore, there exists a constant $C$ such that
\begin{equation}\label{quasi tail cond}
\sup_{{\footnotesize \bbeta}\in\mathbf{B},\left\Vert {\footnotesize \bbeta} -{\footnotesize \bbeta_0}\right\Vert \leq Ck_{n}V_{n}^{-1}\left(p/n\right)^{1/2}}\left|\mathbb{E}\left[l\left(\bX^{T}\bbeta,Y\right)-l\left(\bX^{T}\bbeta_{0},Y\right)\right]\left(1-I_{n}\left(\bX,Y\right)\right)\right|\leq o\left(p/n\right),
\end{equation}
where $I_{n}\left(\bx,y\right)=I\left((\bx,y) \in \Lambda_n\right)$ with constant $V_n$ defined below.

\item The function $l\left(\bX^{T}\bbeta,Y\right)$
is convex in $\bbeta$ and
\[
\left|\mathbb{E}\left[l\left(\bX^{T}\bbeta,Y\right)-l\left(\bX^{T}\bbeta_{0},Y\right)\right]\right|\geq V_{n}\left\Vert \bbeta-\bbeta_{0}\right\Vert ^{2},
\]
for some positive constants $V_{n}$, and all $\left\Vert \bbeta-\bbeta_{0}\right\Vert\leq Ck_{n}V_{n}^{-1}\left(p/n\right)^{1/2}$.
\end{enumerate}
\end{condition}

\begin{thm} (\citealt{SISGLM}) \label{thm quasi}
Under Condition \ref{ass Fisher int}, for any $t>0$ it holds that
\[
\mathbb{P}\left(\sqrt{n}\left\Vert \hat{\bbeta}-\bbeta_{0}\right\Vert\geq16k_{n}\left(1+t\right)/V_{n}\right)\leq\exp\left(-2t^{2}/K_{n}^{2}\right)+n\mathbb{P}\left(\Lambda_{n}^c\right).
\]

\end{thm}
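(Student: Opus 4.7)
The plan is to prove this exponential bound by the standard convex-$M$-estimator argument, built around three ingredients: truncation of the data to $\Lambda_{n}$, convexity of the empirical loss, and a bounded-differences (McDiarmid) concentration on the truncated event.

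First, I would isolate the ``clean'' event $\Omega_{0}=\bigcap_{i=1}^{n}\{(\bX_{i},Y_{i})\in\Lambda_{n}\}$, whose complement has probability at most $n\,\mathbb{P}(\Lambda_{n}^{c})$ by the union bound---this directly supplies the second term in the stated tail bound. On $\Omega_{0}$ every summand of the empirical loss is Lipschitz in $\bbeta$ with constant $k_{n}$, so bounded-differences methods can be applied uniformly.

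Next, by convexity of $\bbeta\mapsto\mathbb{P}_{n}l(\bX^{T}\bbeta,Y)$ (Condition~\ref{ass Fisher int}(iii)), the event $\{\|\hat{\bbeta}-\bbeta_{0}\|>r\}$ with $r:=16k_{n}(1+t)/(V_{n}\sqrt{n})$ implies that the minimum of $\mathbb{P}_{n}l$ over the sphere $\{\|\bbeta-\bbeta_{0}\|=r\}$ does not exceed $\mathbb{P}_{n}l(\bX^{T}\bbeta_{0},Y)$. Hence it suffices to show, on a high-probability event,
\[
\inf_{\|\bbeta-\bbeta_{0}\|=r}\bigl\{\mathbb{P}_{n}l(\bX^{T}\bbeta,Y)-\mathbb{P}_{n}l(\bX^{T}\bbeta_{0},Y)\bigr\}>0.
\]
Decomposing the difference into its mean and its centred part, Condition~\ref{ass Fisher int}(iii) bounds the mean from below by $V_{n}r^{2}$, with the residual contribution from $\Lambda_{n}^{c}$ absorbed by~\eqref{quasi tail cond}. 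It therefore remains to control
\[
Z_{r}:=\sup_{\|\bbeta-\bbeta_{0}\|=r}\bigl|(\mathbb{P}_{n}-\mathbb{E})[l(\bX^{T}\bbeta,Y)-l(\bX^{T}\bbeta_{0},Y)]\bigr|.
\]
The Lipschitz property on $\Omega_{0}$ shows that altering a single sample changes $Z_{r}$ by at most $2k_{n}r/n$; a symmetrization/contraction (Ledoux--Talagrand) argument controls $\mathbb{E}Z_{r}=O(k_{n}r/\sqrt{n})$, and McDiarmid's inequality then yields a sub-Gaussian tail whose scale carries the factor $K_{n}$ coming from the $\ell_{\infty}$ bound $\|\bX_{i}\|_{\infty}\le K_{n}$. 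Calibrating $r=16k_{n}(1+t)/(V_{n}\sqrt{n})$ makes the deterministic lower bound $V_{n}r^{2}$ dominate the stochastic fluctuation on an event of probability at least $1-\exp(-2t^{2}/K_{n}^{2})$.

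The main obstacle is the concentration step: one must track the three constants $k_{n}$, $K_{n}$, and $V_{n}$ through the symmetrization and McDiarmid bounds simultaneously so that the final tail has exactly the stated form $\exp(-2t^{2}/K_{n}^{2})$---any mismatch in the per-sample bounded-differences constant propagates into a different tail rate. A secondary technical point is verifying that the ball of radius $r$ about $\bbeta_{0}$ lies inside the neighbourhood on which the strong-convexity inequality of Condition~\ref{ass Fisher int}(iii) and the truncation-error estimate~\eqref{quasi tail cond} are both assumed to hold---which is precisely why those hypotheses are stated on the set $\{\|\bbeta-\bbeta_{0}\|\le Ck_{n}V_{n}^{-1}(p/n)^{1/2}\}$.
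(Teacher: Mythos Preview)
The paper does not supply its own proof of this statement; it is quoted as a result of \cite{SISGLM} and invoked as a black box in the proof of Theorem~\ref{thm3}. Your sketch follows essentially the same architecture as the original Fan--Song argument: restrict to the clean event $\Omega_{0}=\bigcap_{i}\{(\bX_{i},Y_{i})\in\Lambda_{n}\}$ at the cost of $n\,\mathbb{P}(\Lambda_{n}^{c})$, use convexity of the empirical loss to reduce $\{\|\hat{\bbeta}-\bbeta_{0}\|>r\}$ to a comparison of empirical risk on the $r$-sphere against the centre, lower-bound the population-level gap by $V_{n}r^{2}$ via Condition~\ref{ass Fisher int}(iii) with~\eqref{quasi tail cond} absorbing the truncation bias, and finally control the centred fluctuation by a bounded-differences (Hoeffding/McDiarmid) bound on the truncated data. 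So the approach is correct and matches the cited proof.

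One small correction worth noting: the Lipschitz hypothesis in Condition~\ref{ass Fisher int}(ii) is in the linear index $\theta=\bx^{T}\bbeta$, not directly in $\bbeta$, so on $\Lambda_{n}$ the per-observation change satisfies
\[
\bigl|l(\bx_{i}^{T}\bbeta,y_{i})-l(\bx_{i}^{T}\bbeta_{0},y_{i})\bigr|\le k_{n}\,\bigl|\bx_{i}^{T}(\bbeta-\bbeta_{0})\bigr|\le k_{n}K_{n}\sqrt{p}\,r,
\]
and it is precisely this extra $K_{n}$ factor in the bounded-differences constant (not a separate step) that produces the $K_{n}^{2}$ in the exponent of the final tail. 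You correctly anticipate that $K_{n}$ enters through the $\ell_{\infty}$ bound on $\bX$, but your stated constant $2k_{n}r/n$ for the McDiarmid increment omits it; once that is corrected the calibration $r=16k_{n}(1+t)/(V_{n}\sqrt{n})$ lines up with the claimed bound.
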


The proof of Theorem \ref{thm3} is based on Theorem \ref{thm quasi}.

\begin{proof}[Proof of Theorem \ref{thm3}]
By Lemma 1 of \cite{SISGLM}, Condition~\ref{cond2}(ii) gives the bound
\[
  P(|Y| \geq u) \leq s_1 \exp(-s_0 u).
\]
Hence, we have
\[
   \mathbb{P} ( \Lambda_n^c) \leq P(\|\bX\|_\infty > K_n) + P(|Y| \geq K_n^\star) \leq r_2 \exp(-r_0 K_n^\alpha).
\]
Using this and Theorem \ref{thm quasi}, letting $1+t=c_{3}V_n n^{1/2-\kappa}/\left(16k_{n}\right)$, we have
\begin{align*}
\mathbb{P}\left(\left|\hat{\beta}_{j}^{M}-\beta_{j}^{M}\right|\geq c_{3}n^{-\kappa}\right) & \leq\mathbb{P}\left(\left\Vert \hat{\bbeta}_{\mC j}^M-\bbeta_{\mC j}^M\right\Vert \geq c_{3}n^{-\kappa}\right)\\
 & \leq\exp\left(-c_4 n^{1-2\kappa}/\left(k_{n}K_{n}\right)^{2}\right)+nr_{2}\exp\left(-r_{0}K_{n}^{\alpha}\right),
\end{align*}
for some positive constant $c_4$.  Then, by Bonferroni's inequality, we obtain
\[
\mathbb{P}\left(\max_{q+1\leq j\leq p}\left|\hat{\beta}_{j}^{M}-\beta_{j}^{M}\right|\geq c_{3}n^{-\kappa}\right)\leq d\Big(\exp\big(-c_{4}n^{1-2\kappa}(k_{n}K_{n})^{-2}\big)+nr_{2}\exp\big(-r_{0}K_{n}^{\alpha}\big)\Big).
\]
This proves the first conclusion.

The second statement can be shown by considering the event
\[
\mathcal{A}_{n}=\left\{ \max_{j\in\mathcal{M}_{\star\mathcal{D}}}\left|\hat{\beta}_{j}^{M}-\beta_{j}^{M}\right|\leq c_{3}n^{-\kappa}/2\right\}.
\]
On the event $\mathcal{A}_{n}$, by Theorem \ref{thm2}, it holds that for all $j\in\mathcal{M}_{\star\mathcal{D}}$
\[
\left|\hat{\beta}_{j}^{M}\right|\geq c_{3}n^{-\kappa}/2.
\]
By letting $\gamma=c_{5}n^{-\kappa}\leq c_{3}n^{-\kappa}/2$, on the event $\mathcal{A}_n$ we have
the sure screening property, that is $\mathcal{M}_{\star\mathcal{D}}\subset\hat{\mathcal{M}}_{\mathcal{D},\gamma}$.
The probability bound can be shown by using the first result along
with Bonferroni's inequality over all chosen $j$, which gives
\[
\mathbb{P}\left(\mathcal{A}_{n}^c\right)\leq s\left[\exp\left(-c_{4}n^{1-2\kappa}(k_{n}K_{n})^{-2}\right )
+nr_{2}\exp\left(-r_{0}K_{n}^{\alpha}\right)\right].
\]
This completes the proof.

\end{proof}

\subsection{Proof of Theorem \ref{thm4}}\label{app thm4}

\begin{proof}[Proof of Theorem \ref{thm4}] The first part of the proof  is  similar  to that of Theorem 5 of \cite{SISGLM}. The idea of this proof is to show that
\begin{equation}
\|\bbeta_{\mD}\|^{2} = O \lp \lambda_{\max}\lp \bSigma_{\mD|\mC} \rp \rp.\label{eq norm betaM}
\end{equation}
If this holds, the size of the set $\{j=q+1,\ldots,p:|\beta_{j}^{M}|>\varepsilon n^{-\kappa}\}$
can not exceed $O\lp n^{2\kappa}\lambda_{\max}\lp \bSigma_{\mD|\mC} \rp \rp$
for any $\varepsilon>0$. Thus on the event
\[
\mathcal{B}_{n}=\left\{ \max_{q+1\leq j\leq p}|\hat{\beta}_{j}^{M}-\beta_{j}^{M}|\leq\varepsilon n^{-\kappa}\right\} ,
\]
the set $\{j=q+1,\ldots,p:|\hat{\beta}_{j}^{M}|>2\varepsilon n^{-\kappa}\}$
is a subset of the set $\{j=q+1,\ldots,p:|\beta_{j}^{M}|>\varepsilon n^{-\kappa}\}$,
whose size is bounded by $O \lp n^{2\kappa}\lambda_{\max}\lp \bSigma_{\mD|\mC} \rp \rp$.
If we take $\varepsilon=c_{5}/2$, we obtain that
\[
\mathbb{P}\lp |\hat{\mathcal{M}}_{\mathcal{D},\gamma}|\leq O \lp n^{2\kappa}\lambda_{\max}\lp \bSigma_{\mD|\mC} \rp \rp \rp \geq \mathbb{P} (\mathcal{B}_{n}).
\]
 Finally, by Theorem \ref{thm3}, we obtain that
\[
\mathbb{P}(\mathcal{B}_{n})\geq1-d\Big(\exp\big(-c_{4}n^{1-2\kappa}(k_{n}K_{n})^{-2}\big)
    +nr_{2}\exp\big(-r_{0}K_{n}^{\alpha}\big)\Big)
\]
 and therefore the statement of the theorem follows.

We now prove \eqref{eq norm betaM}
by using $\Var(\bX^{T}\bbeta^{\star})=O(1)$ and (\ref{eqA5}).  By Condition~\ref{cond3}(ii), the Schur's complement $(\Omega_{j,j}-\Omega_{\mC,j}^T\Omega_{\mC,\mC}^{-1}\Omega_{\mC,j})$ is uniformly bounded from below. Therefore, by (\ref{eqA5}), we have
\[
     |\beta_{j}^M| \leq D_1 | \cov_L(Y,X_j|\bX_\mC)|,
\]
for a positive constant $D_1$.  Hence, we need only to bound the conditional covariance.

By (\ref{eqA4}), (\ref{eq9}) and  Lipschitz continuity of $b'(\cdot)$, we have
\begin{eqnarray*}
| \cov_L(Y,X_j|\bX_\mC) |
 & = &\E \bigl |X_j \left \{  b' \lp \bX^T \bbeta^* \rp - b' \lp \bX_{\mC}^T \bbeta_{\mC}^M \rp \right \} \bigr  | \\
&\leq& D_{2} \E \bigl |X_{j}(\bX^{T}\bbeta^{\star}-\bX_{\mC}^{T}\bbeta_{\mC}^M) \bigr |\\
&=&D_{2}\E \bigl |X_{j}[\bX_\mC^{T} \bbeta_\mC^\Delta +\bX_\mD^{T}\bbeta_\mD^{\star}] \bigr |.
\end{eqnarray*}
where $\bbeta_\mC ^\Delta= (\bbeta_\mC^{\star}-\bbeta_{\mC}^M)$.  Writing the last term in the vector form, we need to bound
\begin{eqnarray*}
 \| \E \bX_{\mD}\bX_{\mD}^{T}\bbeta_{\mD}^{\star} + \bX_{\mD}\bX_{\mC}^{T}\bbeta_{\mC}^\Delta \|^2.
\end{eqnarray*}
From the property of the least-squares, we have
$\E [ \E_L (\bX_{\mD} |  \bX_{\mC})  \bX_{\mC}^T ] = \E [ \bX_{\mD}   \bX_{\mC}^T ]$.  Thus the above expression can be written as
\begin{eqnarray*}
 \|   \big[\bSigma_{\mD|\mC} \big] \bbeta_{\mD}^{\star} + \E \E_L (\bX_{\mD} |  \bX_{\mC})  [\bX_{\mC}^{T}\bbeta_{\mC}^\Delta + \E_L(\bX_{\mD}^{T}|\bX_{\mC}) \bbeta_{\mD}^*)] \|
& = & \left\|  \big[\bSigma_{\mD|\mC} \big] \bbeta_{\mD}^\star + \bZ \right\|^2,
\end{eqnarray*}
recalling the definition of $\bZ =  \E \E_L (\bX_{\mD} |  \bX_{\mC})  \lp  \bX^T \bbeta^\star -\bX_{\mC}^T \bbeta_{\mC}^M \rp$ in Condition \ref{cond3}.

Using the law of total variance, we have that
\begin{eqnarray*}
\left\|  \big[\bSigma_{\mD|\mC} \big] \bbeta_{\mD}^\star + \bZ \right\|^2&=&{\bbeta_{\mD}^\star}^T  \big[\bSigma_{\mD|\mC} \big]^2\bbeta_{\mD}^\star + 2 \bZ^T  \big[\bSigma_{\mD|\mC} \big] + \bZ^T \bZ\\
&\leq&\lambda_{\max} \lp \big[\bSigma_{\mD|\mC} \big] \rp \lp {\bbeta_{\mD}^\star}^T  \big[\bSigma_{\mD|\mC} \big]\bbeta_{\mD}^\star\rp + 2 \bZ^T  \big[\bSigma_{\mD|\mC} \big] + \bZ^T \bZ \\
&\leq&\lambda_{\max} \lp \big[\bSigma_{\mD|\mC} \big] \rp \Var(\bX^T \bbeta^\star)+ 2 \bZ^T  \big[\bSigma_{\mD|\mC} \big] + \bZ^T \bZ,
\end{eqnarray*}
and the last two terms are $o\lp \lambda_{\max} \lp  \big[\bSigma_{\mD|\mC} \big] \rp \rp$ due to Condition \ref{cond3}. Therefore, we have that
\begin{equation*}
\|\bbeta_{\mD}\|^{2}=O \lp \lambda_{\max}\lp \big[\bSigma_{\mD|\mC} \big] \rp \rp,
\end{equation*}
and that gives us the desired result.
\end{proof}

\subsection{Proof of Theorem \ref{thm FDR}}\label{app thm FDR}

\begin{proof}[Proof of Theorem \ref{thm FDR}]
Note that the false discovery proportion can be rewritten as
\[
\mathbb{E}\left(\frac{\left|\hat{\mathcal{M}}_{\mathcal{D},\delta}\cap(\mathcal{M}_{\star\mathcal{D}})^c\right|}{\left|(\mathcal{M}_{\star\mathcal{D}})^c\right|}\right)=\frac{1}{d-\left|\mathcal{M}_{\star\mathcal{D}}\right|}\sum_{j\in(\mathcal{M}_{\star\mathcal{D}})^c}\mathbb{P}\left(I_{j}\left(\hat{\beta}_{j}^{M}\right)^{1/2}\left|\hat{\beta}_{j}^{M}\right|\geq\delta\right).
\]

With the given conditions, by Theorem 1, we have $\beta_j^M = 0$.  Since $\bX_{\mC}$ includes the intercept term, $\E e_i = 0$.  It is known that $I_{j}\left(\hat{\beta}_{j}^{M}\right)^{1/2}\left|\hat{\beta}_{j}^{M}\right|$ (for $j \in (\mathcal{M}_{\star\mathcal{D}})^c$)
has an asymptotically  standard normal distribution (Gao \Etal, 2008,
Heyde, 1997). Then, it follows that for a $c_7>0$
\[
\sup_{z}\left|\mathbb{P}\left(I_{j}\left(\hat{\beta}_{j}^{M}\right)^{1/2}\left|\hat{\beta}_{j}^{M}\right|\geq z\right)-\Phi(z)\right|\leq c_7 n^{-1/2}.
\]

Combining both equations, we obtain
\[
\mathbb{E}\left(\frac{\left|\hat{\mathcal{M}}_{\mathcal{D},\delta}\cap(\mathcal{M}_{\star\mathcal{D}})^c\right|}{\left|(\mathcal{M}_{\star\mathcal{D}})^c\right|}\right)\leq\frac{1}{d-\left|\mathcal{M}_{\star\mathcal{D}}\right|}\sum_{j\in(\mathcal{M}_{\star\mathcal{D}})^c}\left(2\left(1-\Phi\left(\delta\right)\right)+c_7 n^{-1/2}\right).
\]

Setting $\delta=\Phi^{-1}\left(1-\frac{f}{2d}\right)$ gives the result.
\end{proof}

\end{document}